\documentclass{amsart}

\usepackage{amsmath}
\usepackage{amssymb}
\usepackage{hyperref}
\usepackage{enumitem,xcolor}
\numberwithin{equation}{section}

\newtheorem{theorem}{Theorem}[section]
\newtheorem{lemma}[theorem]{Lemma}
\newtheorem{corollary}[theorem]{Corollary}
\newtheorem{proposition}[theorem]{Proposition}

\theoremstyle{definition}
\newtheorem{definition}[theorem]{Definition}

\theoremstyle{remark}

\newcommand\R{\mathbb{R}}

\newcommand\C{\mathbb{C}}
\newcommand\N{\mathbb{N}}

\newcommand\real{\mathrm{Re}\,}

\newcommand{\qtq}[1]{\quad\text{#1}\quad}

\renewcommand{\epsilon}{\varepsilon}

\let\Im=\undefined\DeclareMathOperator{\Im}{Im}

\allowdisplaybreaks

\begin{document}

\title{Turbulent Threshold for Continuum Calogero-Moser Models}

\author[J.~Hogan]{James Hogan}
\address{Department of Mathematics, University of California, Los Angeles, CA 90095, USA}
\email{jameshogan@math.ucla.edu}

\author[M.~Kowalski]{Matthew Kowalski}
\address{Department of Mathematics, University of California, Los Angeles, CA 90095, USA}
\email{mattkowalski@math.ucla.edu}

\maketitle

\begin{abstract}
    We determine the sharp mass threshold for Sobolev norm growth for the focusing continuum Calogero--Moser model.
    It is known that below the mass of $2\pi$, solutions to this completely integrable model enjoy uniform-in-time $H^s$ bounds for all $s \geq 0$.
    In contrast, we show that for arbitrarily small $\epsilon > 0$ there exists initial data $u_0 \in H^\infty_+$ of mass $2\pi + \epsilon$ such that the corresponding maximal lifespan solution $u : (T_-, T_+) \times \R \to \C$ satisfies $\lim_{t \to T_\pm} \|u(t)\|_{H^s} = \infty$ for all $s > 0$.
    As part of our proof, we demonstrate an orbital stability statement for the soliton and a dispersive decay bound for solutions with suitable initial data.
\end{abstract}

\section{Introduction}

This paper is devoted to the focusing {\em continuum Calogero--Moser model}
\begin{equation}\tag{CCM}\label{CM-DNLS}
    iu_t + u_{xx} - 2 i u \Pi^+ \partial_x(|u|^2) = 0,
\end{equation}
known in some sources as the {\em Calogero--Moser derivative nonlinear Schr\"odinger equation}. Here $\Pi^+$ is the Cauchy--Szeg\H{o} projection; see \eqref{pi_+}. Solutions are complex-valued functions of spacetime that belong to a Hardy--Sobolev space
\begin{equation}\label{Hardy Sobolev}
    H^s_+(\R) := \{f \in H^s(\R) : \widehat{f}(\xi) = 0 \text{ for } \xi < 0\}, \quad s \geq 0,
\end{equation}
for each time $t$.


A defocusing version of \eqref{CM-DNLS} appeared in \cite{pelinovsky} as a model for internal waves in a stratified fluid, with the nonlinear Schr\"odinger equation emerging as a small-depth limit.
The focusing version studied here is derived in \cite{abanov} as a formal hydrodynamic limit of the Calogero-Moser system --- a Hamiltonian system of particles on a line interacting with an inverse square potential:
\begin{equation}\tag{CM}\label{CM}
    V(x_j) = \sum_{k \neq j} \frac{1}{(x_j - x_k)^2}.
\end{equation}
This particle system, and its generalizations, are well-studied; it is solved in the quantum mechanical case \cite{calogero,calogero2} and is integrable in the classical case \cite{moser}.
The discrete periodic case was studied by Sutherland \cite{sutherland,sutherland2} and the periodic continuum limit has been studied under the name of {\em Calogero--Sutherland derivative NLS} in \cite{badreddine,badreddine2}.


As a continuum limit of an integrable system, it is unsurprising that \eqref{CM-DNLS} is completely integrable. It has a Lax pair given by
\begin{equation*}
    \mathcal{L}_u = -i \partial_x - u \Pi^+ \overline{u} \qtq{and} \mathcal{P}_u = i \partial_x^2 + 2u \Pi^+ \partial_x \overline{u}
\end{equation*}
where $u,\overline{u}$ are understood as multiplication by $u,\overline{u}$, respectively; see \cite{gerard-lenzmann}.
This Lax pair generates an infinite hierarchy of conserved quantities \cite{gerard-lenzmann}
\begin{equation}\label{conserved}
    I_k(u) := \langle u, \mathcal{L}_u^k u \rangle.
\end{equation}
The first and most relevant of these are the mass
\begin{equation*}
    M(u) := I_0(u) = \int|u|^2,
\end{equation*}
the momentum
\begin{equation*}
    P(u) := I_1(u) = \int -i\overline{u} u_x - \tfrac{1}{2} |u|^4,
\end{equation*}
and the Hamiltonian
\begin{equation*}
    E(u) := \tfrac{1}{2} I_2(u) = \tfrac{1}{2} \int \big|u_x - i u \Pi^+(|u|^2)\big|^2.
\end{equation*}
This Hamiltonian generates the flow of \eqref{CM-DNLS}, albeit with a nonstandard Poisson structure arising from a gauge transformation in \cite{abanov}.


Given the infinite family of conserved quantities \eqref{conserved}, one might expect that solutions to \eqref{CM-DNLS} satisfy a priori bounds in $H^s$ for all $s \geq 0$.
Indeed, for initial data $u_0 \in H^\infty_+$ with $M(u_0) < 2\pi$,
G\'{e}rard and Lenzmann showed in \cite{gerard-lenzmann} that the corresponding maximal lifespan solution $u(t)$ satisfies
\begin{equation}\label{a priori bounds}
    \|u\|_{L^\infty_tH_x^n} \leq C(I_0(u), I_1(u), ..., I_{2n}(u))
\end{equation}
for all integers $n \geq 0$.
Using \eqref{a priori bounds} they showed that for initial data with mass less than $2\pi$, \eqref{CM-DNLS} is globally well-posed in $H^n_+$ for all integers $n \geq 1$.

On the other hand, G\'{e}rard and Lenzmann also showed that \eqref{CM-DNLS} has multisoliton solutions for which these a priori bounds fail.
Specifically, for $N \geq 2$, the $N$-soliton solutions, which have mass $2 \pi N$, exhibit a growth of Sobolev norms of the form $\|u(t)\|_{H^s} \sim |t|^{2s}$ for large $|t|$.
This raised the question of what is the true threshold for the growth of Sobolev norms.
We answer this question by proving the following:
\begin{theorem}[Turbulent behaviour]\label{main result}
    For any sufficiently small $\epsilon > 0$, there exist initial data $u_0 \in H^\infty_+(\R)$ with $M(u_0) = 2\pi + \epsilon$, a time $T \in (0, \infty]$, and a maximal lifespan solution $u(t)$ to \eqref{CM-DNLS} such that for all $s > 0$, $u \in C_tH^s_x([0,T) \times \R)$ and
    \begin{equation*}
        \lim_{t \nearrow T} \|u(t)\|_{H^s} = +\infty.
    \end{equation*}
    In particular, if $T = \infty$, then we have the bounds
    \begin{equation*}
        \|u(t)\|_{H^s} \gtrsim t^s.
    \end{equation*}
    An analogous statement holds backwards in time.
\end{theorem}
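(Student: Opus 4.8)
\emph{Overall strategy.} The idea is to build the threshold solution as a soliton that already carries the full critical mass $2\pi$, perturbed by a small, carefully engineered wave of mass $\epsilon$ whose net effect is to accelerate the soliton indefinitely. Let $Q \in H^\infty_+(\R)$ be the ground-state solitary wave, normalized so that $M(Q) = 2\pi$ and $E(Q) = 0$, and let $\mathcal{M} = \{ Q_{\gamma,y,v,\mu} \}$ be the four-parameter family obtained by acting on $Q$ with the symmetries of \eqref{CM-DNLS}: phase rotation $\gamma$, translation $y$, Galilean boost $v$, and the mass-preserving scaling $u(t,x) \mapsto \mu^{1/2} u(\mu^2 t, \mu x)$. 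Since scaling multiplies $E$ by $\mu^2$ and the other symmetries preserve it, every element of $\mathcal{M}$ has zero energy. I would take initial data $u_0 = Q + \epsilon^{1/2}\phi$, suitably renormalized to have mass exactly $2\pi + \epsilon$, where $\phi \in \Schw(\R) \cap H^\infty_+$ is a fixed profile; because $E'(Q) = 0$, such data has $E(u_0) = \bigO(\epsilon)$. The precise choice of $\phi$ is determined in the third step by the requirement that it "pump momentum" into the soliton.

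\emph{Orbital stability.} The nonnegativity $E \geq 0$, with equality exactly on $\mathcal{M}$, together with conservation of $M$ and $E$, should give a Lyapunov-type orbital stability statement: any solution with $M$ within $\epsilon$ of $2\pi$ and $E$ at most $\bigO(\epsilon)$ stays within $\bigO(\epsilon^{1/2})$ of $\mathcal{M}$, in the relevant energy-type norm, throughout its lifespan. This follows from a coercivity estimate for the Hessian $E''(Q)$ transverse to $T_Q\mathcal{M}$ together with a modulation/compactness argument; the one delicate point is that $\mathcal{M}$ is noncompact in the boost and scaling directions, so one only controls the distance to $\mathcal{M}$ \emph{modulo} all four symmetries, leaving $v(t)$ and $\mu(t)$ free to escape to infinity --- which is exactly the degeneracy we will exploit. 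Consequently we may write, for $t$ in the maximal interval, $u(t) = Q_{\gamma(t), y(t), v(t), \mu(t)} + w(t)$ with orthogonality conditions fixing the parameters and $\norm{w(t)} = \bigO(\epsilon^{1/2})$.

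\emph{Dispersive decay and the modulation ODEs.} Here I would bring in complete integrability. Using the Lax operator $\mathcal{L}_u$ and the explicit solution formula of \cite{gerard-lenzmann}, one should prove a dispersive decay estimate for the remainder: for our class of "soliton plus Schwartz perturbation" data, $\norm{w(t)}_{L^\infty_x} \lesssim \epsilon^{1/2}\langle t\rangle^{-1/2}$, together with local-in-space $L^2$ decay. Inserting the modulation decomposition into \eqref{CM-DNLS} and projecting onto the symmetry directions yields a system of ODEs for $(\gamma, y, v, \mu)$ whose right-hand sides are, up to errors made time-integrable by the dispersive bound, controlled by the fixed profile $\phi$. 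One chooses $\phi$ so that $\dot v(t) = c_\phi + (\text{integrable error})$ with $c_\phi > 0$, forcing $v(t) \to +\infty$, while simultaneously checking that $\mu(t)$ remains comparable to $1$, so that no spatial concentration occurs. Since for a boosted soliton $\norm{Q_{\gamma,y,v,\mu}}_{\dot H^s_x} \gtrsim |v|^s$ once $|v|$ is large and $\mu \sim 1$, for \emph{every} $s > 0$, while $\norm{w(t)}$ stays bounded, we get $\norm{u(t)}_{H^s} \to \infty$ as $t \to T$, the end of the maximal lifespan. When $T = \infty$, $\dot v(t) \to c_\phi$ gives $v(t) \sim t$ and hence the sharp rate $\norm{u(t)}_{H^s} \gtrsim t^s$. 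The backward-in-time statement follows from the same construction applied to a reflected profile.

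\emph{Main obstacle.} The crux is a bootstrap in which four estimates must close simultaneously: the smallness of $w$ (needed for the modulation ODEs), the dispersive decay of $w$ (needed to integrate the ODE errors), the monotone divergence of $v$, and the boundedness of $\mu$. The tension is structural: $w$ must decay in $L^\infty$ and in local $L^2$ while still delivering a \emph{non-decaying} cumulative impulse to the soliton. Since the soliton's momentum grows like $v(t)$, momentum conservation forces this impulse to be stored in ever-higher frequencies of $w$, so $\norm{w(t)}_{H^s}$ may itself grow even though $w$ "disperses" in the pointwise sense; disentangling these requires genuine use of the integrable structure rather than purely perturbative estimates. A further complication is that the a priori bounds \eqref{a priori bounds} of \cite{gerard-lenzmann} degenerate as the mass approaches $2\pi$, so the orbital stability bound must itself supply the global-in-time control that drives the continuity argument.
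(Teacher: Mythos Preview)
Your proposal has a genuine gap: the mechanism you posit is the opposite of what actually occurs, and the modulation-ODE bootstrap you outline cannot close as stated.

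You assume that the scaling parameter stays bounded, $\mu(t)\sim 1$, and that growth is driven by a Galilean boost parameter $v(t)\to\infty$. In the paper, the three-parameter modulation (phase, translation, scaling) is used, and what is proved is precisely spatial concentration: the scaling parameter $\lambda(t)\to 0$, in fact $\lambda(t)\lesssim t^{-1}$. The argument is not perturbative. First, the negative eigenvalue of $\mathcal{L}_{u_0}$ (present for data near $(1+\epsilon)Q$) forces $\|u(t)\|_{\dot H^1}\gtrsim 1$ for all $t$, which rules out $\lambda(t)\to\infty$. Second, the explicit solution formula yields $|u(t,z)|\lesssim t^{-1/2}(1+(\Im z)^{-1})$ for $\Im z>0$; evaluating at $z=y(t)+i\lambda(t)$ and using $|q(t,z)|=(2\lambda(t))^{-1/2}$ forces $\lambda(t)\to 0$. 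No modulation ODEs, no bootstrap, and no special choice of perturbation profile are needed.

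Your boost mechanism also runs into a hard conservation-law obstruction that you flag but do not resolve. A boosted soliton $Q_{\gamma,y,v,\mu}$ with $\mu\sim 1$ carries momentum $P\sim v$; since $P$ is conserved and $P(u_0)=\bigO(1)$, the remainder $w$ would have to carry momentum $-v(t)\to-\infty$ while remaining $\bigO(\epsilon^{1/2})$ in the energy norm. That is incompatible with any reasonable control on $w$ and with the Hardy-space constraint (which forbids arbitrarily negative frequencies). Your ``pumping'' heuristic $\dot v = c_\phi + (\text{integrable})$ would make the soliton momentum grow linearly, and there is no mechanism in an $\bigO(\epsilon^{1/2})$ dispersing tail to balance it. The paper sidesteps all of this by working with concentration rather than boost, and by replacing the modulation-ODE machinery with the two rigid inputs above: a spectral lower bound from the Lax operator and pointwise decay in the upper half-plane from the explicit formula.
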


This theorem shows that the global-in-time $H^s$ bounds \eqref{a priori bounds} fail above the mass of $2\pi$.
This presents a significant obstacle to existing techniques for proving global well-posedness of \eqref{CM-DNLS} above the mass of $2\pi$.

The $2\pi$ mass threshold appears in this analysis through the sharp inequality 
\begin{equation}\label{mass bound}
    \|\Pi^+ \overline{u} f\|_{L^2}^2 \leq \tfrac{1}{2\pi} M(u) \langle f, -i\partial_x f\rangle
\end{equation}
valid for $u \in L^2_+$ and $f \in H^{\frac{1}{2}}_+$; see \cite{gerard-lenzmann}.
This inequality is saturated by the function
\begin{equation*}
    Q(x) = \frac{\sqrt{2}}{x + i}
\end{equation*}
which is the unique nonzero optimizer up to phase, translation, and $L^2$-scaling symmetries.
It is known as the soliton solution to \eqref{CM-DNLS} and does not evolve in time.
The soliton satisfies $M(Q) = 2\pi$ and $I_k(Q) = 0$ for all $k \geq 1$.

Within our proof, this mass threshold also occurs prominently in the existence of eigenvalues for the Lax operator. In \cite{gerard-lenzmann}, it was shown that
\begin{equation}\label{eigenvalue bound}
    2 \pi N \leq M(u)
\end{equation}
where $N$ is the number of eigenvalues of $\mathcal{L}_u$. Our proof requires initial data $u_0$ for which the Lax operator $\mathcal{L}_{u_0}$ has a negative eigenvalue, which is consistent with our solutions having mass at least $2\pi$.
The soliton $Q$ and the multisolitons are introduced in \cite{gerard-lenzmann} as optimizers of \eqref{eigenvalue bound}.


Let us briefly review the available well-posedness results on this model. Local well-posedness in $H^s_+(\R)$ for $s > \tfrac{1}{2}$ was established by G\'erard and Lenzmann in \cite{gerard-lenzmann}.
This was shown for $s > \frac{3}{2}$ via a Kato-type iterative scheme and standard energy methods and extended to $s > \frac{1}{2}$ following an argument in \cite{lower-regularity}.
For $M(u) < 2\pi$, G\'erard and Lenzmann used \eqref{mass bound} to control $\|u\|_{H^{\frac{1}{2}}}$ in terms of $P(u)$ and $M(u)$.
They obtained the bounds \eqref{a priori bounds} on $H^s$ for all $s \geq 0$ iteratively using the conserved quantities $I_k(u)$ defined in \eqref{conserved}.
This extended local well-posedness in $H^s_+$ to global well-posedness in $H^n_+$ for integers $n \geq 1$, so long as $M(u) < 2\pi$. 
They also showed that $H^s_+$ solutions for $s \geq 1$ to \eqref{CM-DNLS} with mass exactly $2\pi$ cannot blow up in finite time, which extended global well-posedness to mass at most $2\pi$ for such regular solutions.

Recent work by Laurens, Killip, and Vi\c{s}an \cite{monica-rowan-thierry-2023} extended global well-posedness below the mass of $2\pi$ to $H^s_+(\R)$ for $s \geq 0$.
Central to their analysis was the concept of $L^2$-equicontinuity:
\begin{definition}[$L^2$ Equicontinuity]\label{equicontinuity}
    A bounded set $U \subset L^2(\R)$ is said to be {\em equicontinuous} in the $L^2(\R)$ topology if
    \begin{equation*}
        \limsup_{\delta\to0}\sup_{\substack{u \in U,\; |y| < \delta}} \|u(\cdot +y) - u(\cdot)\|_{L^2} = 0,
    \end{equation*}
    or equivalently,
    \begin{equation*}
        \limsup_{\kappa \to \infty} \,\sup_{u \in U} \int_{|\xi| \geq \kappa} |\widehat{u}(\xi)|^2 d\xi = 0.
    \end{equation*}
\end{definition}
The argument in \cite{monica-rowan-thierry-2023} relied on showing that any $L^2$-equicontinuous family $U \subset H^\infty_+$ of initial data leads to a family of partial orbits
\begin{equation*}
    U_T^* = \{u(t) : u \text{ solves \eqref{CM-DNLS}}, u(0) \in U, t \in [0, T)\}
\end{equation*}
which is $L^2$-equicontinuous for any choice of $T > 0$.
They defined the sharp mass threshold $M^*$ such that if $U \subset H^\infty_+$ is $L^2$-equicontinuous and $\sup_{u_0 \in U} M(u_0) < M^*$, then $U_T^*$ is also $L^2$-equicontinuous for all $T > 0$.
They showed that $M^* \geq 2\pi$ and that \eqref{CM-DNLS} is globally well-posed in $H^s_+$ for all $s \geq 0$ below the mass $M^*$.
The exact value of $M^*$ is currently unknown, but Theorem~\ref{main result} shows that global-in-time $L^2$-equicontinuity fails in general for mass greater than $2\pi$.
Specifically, for any $\epsilon > 0$ sufficiently small, we find initial data $u_0 \in H^\infty_+$ with $M(u_0) = 2\pi + \epsilon$ for which the maximal orbit fails to be $L^2$-equicontinuous.

The local theory in \cite{gerard-lenzmann} allows us to consider $H^\infty_+$ solutions of \eqref{CM-DNLS}, which are sufficient for our analysis.
For $H^\infty_+$ solutions, Laurens, Killip, and Vi\c{s}an showed in \cite{monica-rowan-thierry-2023} that finite-time blowup must be accompanied by a loss of $L^2$-equicontinuity and hence simultaneous blowup in $H^s$ for all $s > 0$.
The well-posedness necessary for our main theorem is then stated as follows:
\begin{proposition}\label{well-posedness}
    For initial data $u_0 \in H^\infty_+$ there exists a maximal time $T \in (0, \infty]$ and a unique solution $u(t)$ such that $u \in C_t H^s_+ ([0, T) \times \R)$ for all $s \geq 0$.
    Moreover, if $T < \infty$ then
    \begin{equation*}
        \lim_{t \nearrow T} \|u(t)\|_{H^s} = +\infty
    \end{equation*}
    for all $s > 0$.
    An analogous statement holds backwards in time.
\end{proposition}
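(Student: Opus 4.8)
The plan is to deduce the statement by combining two inputs that I do not intend to reprove: the local well-posedness theory of G\'erard and Lenzmann in $H^s_+$ for $s>\tfrac12$, upgraded to $H^\infty_+$ by persistence of regularity, which produces a maximal-lifespan solution with the usual subcritical blowup alternative at one high regularity; and the theorem of Laurens, Killip and Vi\c{s}an that, for $H^\infty_+$ solutions, a finite maximal time is automatically accompanied by a loss of $L^2$-equicontinuity of the partial orbit and therefore by simultaneous blowup of every positive Sobolev norm. The only real work is to assemble these, checking in particular that the maximal time does not depend on the Sobolev index.

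\emph{Existence, uniqueness, and a regularity-independent lifespan.} Fix $s_0\in(\tfrac12,\infty)$. By \cite{gerard-lenzmann}, for $u_0\in H^{s_0}_+$ there is a maximal $T\in(0,\infty]$ and a unique solution $u\in C([0,T);H^{s_0}_+)$, with the alternative that $T<\infty$ forces $\limsup_{t\nearrow T}\|u(t)\|_{H^{s_0}}=\infty$. The same local theory applies to data of higher regularity, and persistence of regularity shows that for each $s\ge s_0$ the quantity $\|u(t)\|_{H^s}$ remains finite on every compact subinterval of $[0,T)$; hence the maximal existence time is independent of $s\ge s_0$. For $u_0\in H^\infty_+$ this yields a single $T$ with $u\in C([0,T);H^s_+)$ for all $s\ge s_0$, and so for all $s\ge 0$ by the embedding $H^{s_0}_+\hookrightarrow H^s_+$. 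That the solution stays in $H^\infty_+$, i.e. retains Fourier support in $[0,\infty)$, is immediate, since $\Pi^+$, the operator $\partial_x^2$, and multiplication of two functions with nonnegative Fourier support all keep the spectrum in $[0,\infty)$; and uniqueness in the class $\bigcap_{s\ge0}C_tH^s_+$ is inherited from the $H^{s_0}_+$ theory, because any such solution is in particular a $C_tH^{s_0}_+$ solution. Running the same construction on an interval $(T_-,0]$ gives the backward-in-time part.

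\emph{Catastrophic finite-time blowup.} Suppose $T<\infty$. The subcritical alternative above already gives $\limsup_{t\nearrow T}\|u(t)\|_{H^{s_0}}=\infty$, but this is too weak: it is a $\limsup$, not a limit, and it concerns a single regularity above $\tfrac12$ rather than every $s>0$ (in particular arbitrarily small $s$, for which no elementary criterion is available). Both gaps are closed by \cite{monica-rowan-thierry-2023}: for an $H^\infty_+$ solution, $T<\infty$ forces the partial orbit $\{u(t):t\in[0,T)\}$ to fail $L^2$-equicontinuity, and loss of $L^2$-equicontinuity in turn forces $\lim_{t\nearrow T}\|u(t)\|_{H^s}=+\infty$ for every $s>0$. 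I would invoke this directly. The point I expect to be genuinely non-soft --- and where one really cannot get by with dispersive or energy estimates --- is precisely this last implication, which relies on the completely integrable structure: conservation of the hierarchy $I_k(u)$ together with the sharp inequality \eqref{mass bound} is what lets $L^2$-equicontinuity of the orbit be upgraded to uniform-in-time $H^{s_0}$ control, so that its failure (blowup) propagates to all of $H^s$ at once rather than merely to a $\limsup$ at one level. The backward-in-time statement follows by the same argument applied on $(T_-,0]$.
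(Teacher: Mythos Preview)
Your proposal is correct and matches the paper's approach exactly: the paper does not give a standalone proof of this proposition but simply records it as the combination of the G\'erard--Lenzmann local theory with the Laurens--Killip--Vi\c{s}an result that finite-time blowup of an $H^\infty_+$ solution forces loss of $L^2$-equicontinuity and hence simultaneous blowup in every $H^s$, $s>0$. One small correction to your commentary: your speculation that the mechanism behind the equicontinuity-to-$H^s$ upgrade in \cite{monica-rowan-thierry-2023} runs through the sharp inequality \eqref{mass bound} cannot be right as stated, since \eqref{mass bound} is only useful below mass $2\pi$ whereas Proposition~\ref{well-posedness} carries no mass restriction; the finite-time propagation of equicontinuity in \cite{monica-rowan-thierry-2023} is established by a different argument that does not require $M(u)<2\pi$.
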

    \subsection{Overview of proof}
    We start by proving an orbital stability result for the soliton for a restricted class of initial data in Theorem~\ref{stability result}.
Specifically, we show that for any $\epsilon > 0$, if $u_0$ is sufficiently close to $Q$ in $H^1$ and $u(t)$ is uniformly bounded below in $\dot{H}^1$, then
\begin{equation*}
    \inf_{\lambda > 0; \;\theta, y \in \R} \|e^{i\theta} \lambda^{\frac{1}{2}} u(t, \lambda x + y) - Q(x)\|_{H^1_x} < \epsilon,
\end{equation*}
for all times of existence $t$. 
This allows us to choose parameters $\lambda(t) > 0$ and $\theta(t), y(t) \in \R$ such that
\begin{equation}\label{modulation equation}
    \|e^{i\theta(t)} \lambda(t)^{\frac{1}{2}} u(t, \lambda(t) x + y(t)) - Q(x)\|_{H^1_x} < \epsilon.
\end{equation}
The scaling parameter $\lambda(t)$ then represents the characteristic width of $u(t)$.
We verify that this set of solutions is non-empty by showing that initial data $u_0 \in H^\infty_+$ for which $\mathcal{L}_{u_0}$ has a negative eigenvalue lead to solutions that satisfy a uniform lower bound in $\dot{H}^1$; see Lemma~\ref{lower bound}.
An example of such initial data is $u_0 = (1+\epsilon)Q$ for $\epsilon > 0$ as shown in Lemma~\ref{negative eigenvalue}.

We then prove a dispersive decay bound in Theorem~\ref{dispersive decay} that shows that global-in-time $H^\infty_+$ solutions to \eqref{CM-DNLS} satisfy $|u(t, z)| \to 0$ for $\Im z > 0$ as $t \to \infty$, where $u(t, z)$ is defined by holomorphic extension to the upper half-plane.
For solutions satisfying \eqref{modulation equation}, this dispersive decay is inconsistent with $\lambda(t) \sim 1$ uniformly in $t$; we must either have that $\lambda(t) \to 0$ or $\lambda(t) \to \infty$ as $t \to \infty$.
The lower bound on $\|u(t)\|_{\dot{H}^1}$ rules out the case of $\lambda(t) \to \infty$ and so we must have $\lambda(t) \to 0$ as $t \to \infty$.
This forces $\|u(t)\|_{H^s}$ to grow to $\infty$ as $t \to \infty$, as desired.
The lower bound $\|u(t)\|_{H^s} \gtrsim t^s$ then follows from the rate of dispersive decay that we obtain.
    \subsection{Acknowledgements}
    J.H.\ and M.K.\ were supported in part by NSF grant DMS-2154022 and NSF grant DMS-2054194. We would like to thank our advisors, Rowan Killip and Monica Vi\c{s}an, for their guidance and support.
    \subsection{Notation}

We use the standard notation $A \lesssim B$ to indicate that $A \leq C B$ for some universal constant $C > 0$. If both $A \lesssim B$ and $B \lesssim A$ then we use the notation $A \sim B$.
When the implied constant fails to be universal, the relevant dependencies will be indicated within the text.

Our conventions for the Fourier transform are
\begin{equation*}
\widehat{f}(\xi) = \tfrac{1}{\sqrt{2\pi}}\int e^{-i \xi x} f(x) dx \quad \text{so} \quad f(x) = \tfrac{1}{\sqrt{2\pi}} \int e^{i \xi x} \widehat{f}(\xi) d\xi.
\end{equation*}
These Fourier transforms are unitary on $L^2$ and hence yield the standard Plancherel identities. With these conventions, we define the Cauchy--Szeg\"o projector $\Pi^+ : L^2 \to L^2_+$ as
\begin{equation}\label{pi_+}
[\Pi^+ f](x) = \tfrac{1}{\sqrt{2\pi}} \int_0^\infty e^{ix\xi} \widehat{f}(\xi) d\xi.
\end{equation}


For $s \geq 0$, we define the Sobolev spaces $H^s$ as the completion of $\mathcal{S}(\R)$ with respect to the norm
\begin{equation*}
    \|f\|^2_{H^s} = \int \big( 1 + |\xi|^2 \big)^s |\widehat{f}(\xi)|^2 d\xi.
\end{equation*}
The homogeneous Sobolev spaces $\dot{H}^s$ are defined similarly by the norm
\begin{equation*}
    \|f\|^2_{\dot{H}^s} = \int |\xi|^{2s} |\widehat{f}(\xi)|^2 d\xi.
\end{equation*}
We work in the Hardy-Sobolev spaces, denoted $H_+^s = H_+^s(\R) \subset H^s(\R)$, comprised of those functions whose Fourier transforms are supported on $[0,\infty)$ as in \eqref{Hardy Sobolev}. Equivalently, $H^s_+$ is comprised of $f\in H^s$ whose Poisson integral
\begin{equation*}\label{harmonic extension}
    f(z) = \int\frac{\Im z}{\pi|x- z|^2} f(x) dx \quad \text{defined for} \quad \Im z > 0
\end{equation*}
is holomorphic in the upper half-plane. This formulation of $H^s_+$ allows us to work interchangeably with $f(x)$ for $x \in \R$ and $f(z)$ for $\Im z > 0$. With the Poisson formulation, H\"older's inequality implies
\begin{equation}\label{continuity of Hardy extension}
    |f(z)| \lesssim (\Im z)^{-1/p} \|f\|_{L^p(\R)}
\end{equation}
for all $1 \leq p \leq \infty$. In this paper, we mainly work in the spaces $L^2_+ = H^0_+$ and $H^\infty_+ = \cap_{s} H^s_+$.

We observe that \eqref{CM-DNLS} is invariant under phase rotation, translation, and $L^2$-critical scaling
\begin{equation*}
    u(t,x) \mapsto e^{i\theta} \lambda^{\frac{1}{2}} u(\lambda^2 t, \lambda x + y)
\end{equation*}
for $\lambda > 0$ and $y,\theta \in \R$. We adopt the notation
\begin{equation}\label{symmetries}
    u_{\lambda, y, \theta}(t,x) = e^{i\theta} \lambda^{\frac{1}{2}} u(t, \lambda x + y)
\end{equation}
for $\lambda > 0$ and $y,\theta \in \R$. Note that only the spatial component is altered here.

\section{Orbital stability of the soliton}
In this section we seek to prove

\begin{theorem}[Restricted orbital stability of $Q$]\label{stability result}
    Fix $c > 0$. For all $\epsilon > 0$ there exists $\delta > 0$ such that if $u_0 \in H^1$ satisfies
    \begin{equation*}
        \|u_0 - Q\|_{H^1} < \delta
    \end{equation*}
    and the corresponding maximal lifespan solution $u$ satisfies $\|u(t)\|_{\dot{H}^1} \geq c$, then
    \begin{equation*}
        \inf_{\lambda > 0; \;\theta, y \in \R}\|u_{\lambda, \theta, y}(t)-Q\|_{H^1} < \epsilon
    \end{equation*}
    for all times of existence $t$, using the notation from \eqref{symmetries}.
\end{theorem}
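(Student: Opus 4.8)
The plan is to establish this restricted orbital stability statement via the standard variational/concentration-compactness machinery, adapted to the fact that the conserved energy $E$ here is \emph{not} coercive in the usual way --- it vanishes on the soliton and on its symmetry orbit --- so we cannot simply run a Weinstein-type argument. Instead, the key structural input is the sharp inequality \eqref{mass bound} together with the fact that $Q$ is its unique optimizer up to the symmetries of phase, translation, and $L^2$-scaling. The first step is to rewrite the energy using \eqref{mass bound}: since $E(u) = \tfrac12\|u_x - iu\Pi^+(|u|^2)\|_{L^2}^2 \geq 0$ and, after expanding, $E(u)$ controls $\|u_x\|_{L^2}^2$ from above and below by quantities involving $\|\Pi^+\overline{u}u\|_{L^2}^2$, the condition $M(u_0)$ close to $2\pi$ forces the defect in \eqref{mass bound} (applied with $f = u$) to be small whenever $E(u)$ is small. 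Since $E$ and $M$ are conserved and $u_0$ is $H^1$-close to $Q$ (so $E(u_0)$ is close to $E(Q) = 0$ and $M(u_0)$ close to $M(Q) = 2\pi$), we get that along the entire flow the pair $(u(t), u(t))$ is a near-optimizer of \eqref{mass bound} with near-critical mass.

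The second step is a compactness/profile argument: any sequence $v_n$ with $M(v_n) \to 2\pi$, $\|v_n\|_{\dot H^1}$ bounded above and below, and with the defect in \eqref{mass bound} tending to zero, must, after applying the scaling/translation/phase symmetries, converge in $H^1$ to $Q$ along a subsequence. This is where the uniqueness of the optimizer of \eqref{mass bound} is used: one extracts a weak limit, uses the lower bound $\|v_n\|_{\dot H^1} \geq c$ and the normalization to prevent the profile from escaping to zero amplitude or spreading out, and then upgrades weak to strong convergence using that both sides of \eqref{mass bound} converge to the same value (a standard ``no loss of mass / no loss of derivative'' argument in frequency space, exploiting $L^2$-equicontinuity of near-optimizers). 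I would phrase this as a standalone lemma: if $\|v\|_{\dot H^1}\ge c$, $M(v)$ is within $\delta$ of $2\pi$, and $E(v)<\delta$, then $\inf_{\lambda,\theta,y}\|v_{\lambda,\theta,y}-Q\|_{H^1}$ is small, with the smallness controlled by $\delta$ and $c$. The role of the hypothesis $\|u(t)\|_{\dot H^1}\ge c$ is precisely to rule out the degenerate scaling direction $\lambda\to\infty$ (which would send $\|v_{\lambda,\theta,y}\|$ to zero): this is the vanishing scenario in the dichotomy, and without the lower bound the conclusion genuinely fails.

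The third step is routine: given the lemma, apply it at each time $t$ to $v = u(t)$. Conservation of $M$ and $E$ gives $M(u(t)) = M(u_0)$ within $\delta$ of $2\pi$ and $E(u(t)) = E(u_0) < \delta'$ (where $\delta' \to 0$ as $\delta \to 0$, by continuity of $E$ on $H^1$ near $Q$), and the hypothesis supplies $\|u(t)\|_{\dot H^1} \geq c$; the lemma then yields the desired $\inf_{\lambda,\theta,y}\|u_{\lambda,\theta,y}(t) - Q\|_{H^1} < \epsilon$ uniformly in $t$. One small point to check carefully is that closeness in $H^1$ of $u_0$ to $Q$ really does give smallness of $E(u_0)$: since $E$ is a continuous functional on $H^1$ (it is a polynomial expression in $u$, $\overline u$, $u_x$ with the bounded operator $\Pi^+$) and $E(Q) = 0$, this is immediate, but one must make sure the modulus of continuity is genuinely independent of anything uncontrolled.

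The main obstacle I expect is the compactness lemma in the second step, specifically proving that near-optimizers of \eqref{mass bound} with near-critical mass are $L^2$-equicontinuous (so that no frequency mass escapes to infinity in the limit), which is what allows the upgrade from weak to strong $H^1$ convergence. The inequality \eqref{mass bound} only directly sees the $\dot H^{1/2}$-level quantity $\langle f, -i\partial_x f\rangle$, so controlling the full $H^1$ norm and its equicontinuity requires also using the structure of $E$ (which sees $\|u_x\|_{L^2}$) and carefully tracking that the cross term $\Re\langle u_x, iu\Pi^+(|u|^2)\rangle$ does not conspire to hide lost derivatives. A secondary subtlety is bookkeeping the three-parameter symmetry group: one must fix a normalization (e.g. $\lambda = 1$ and the center of mass or the point of maximal modulus at the origin) before extracting limits, and verify the extracted profile is nonzero using the lower bound $c$; this is standard but needs to be done with care so that the final $\epsilon$--$\delta$ statement has the right quantifier structure, with $\delta$ depending on both $\epsilon$ and $c$.
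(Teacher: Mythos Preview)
Your approach is correct and follows the same Cazenave--Lions variational strategy as the paper: use conservation of $M$ and $E$ to reduce to a compactness statement about functions with mass near $2\pi$, energy near zero, and $\dot H^1$ norm bounded below, then extract a limit equal to $Q$ up to symmetries. Two efficiencies in the paper's execution are worth noting. First, the paper does not prove the compactness step from scratch: it quotes the ``Minimal Mass Bubble'' lemma of G\'erard--Lenzmann, which directly gives $L^2$-convergence to $Q$ (after symmetries) from the hypotheses $E(v^n)\to 0$, $M(v^n)\to 2\pi$, $\|v^n\|_{\dot H^1}=\mu>0$; this sidesteps your equicontinuity and profile-decomposition concerns entirely. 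Second, the upgrade from $L^2$ to $H^1$ convergence is a single energy expansion exploiting the identity $\mathcal L_Q Q = 0$ (equivalently $Q_x = iQ\,\Pi^+(|Q|^2)$): writing $w^n = v^n_{\lambda,\theta,x_n}-Q$ and expanding $E(v^n_{\lambda,\theta,x_n})$, the zeroth-order-in-$w^n$ terms cancel by this identity, leaving $\tfrac12\|\partial_x w^n\|_{L^2}^2$ plus terms controlled by $\|w^n\|_{L^2}$; since both $E$ and $\|w^n\|_{L^2}$ tend to zero, so does $\|\partial_x w^n\|_{L^2}$. Finally, your Step~1 detour through the defect in \eqref{mass bound} is unnecessary and somewhat off-track: the compactness lemma you actually state (and the paper's cited lemma) takes $E(v)<\delta$ as hypothesis directly, and the relation between small $E$ and small defect in \eqref{mass bound} is not as clean as you suggest, since the latter sees $\langle u,-i\partial_x u\rangle$ at the $\dot H^{1/2}$ level while $E$ involves $\|u_x\|_{L^2}$.
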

Note that in our orbital distance, the symmetries are applied to $u$ rather than $Q$. This is reminiscent of modulation theory for gKdV and NLS, \cite{martel-merle-2001,martel-merle-2002, weinstein1985modulational}.
In the case of $s = 0$, scaling-criticality implies 
\begin{equation*}
    \inf_{\lambda>0; \; \theta, y \in \R} \|u_{\lambda, \theta, y} - Q\|_{L^2} = \inf_{\lambda>0; \; \theta, y \in \R} \|u - Q_{\lambda, \theta, y}\|_{L^2},
\end{equation*}
which is the standard distance from $u$ to the manifold of solitons. In contrast, for $s \neq 0$, the scaling term prevents the symmetries from being moved to $Q$ without accruing additional factors of $\lambda$. Because of this, our orbital distance for $s \neq 0$ differs from the standard notion.

We argue via a variational approach akin to Cazenave and Lions \cite{cazenave-lions}. We therefore recall the characterization of $Q$ as a minimizer of the energy, as given by G\'erard and Lenzmann in \cite{gerard-lenzmann}.
\begin{lemma}[Minimal mass bubble, \cite{gerard-lenzmann}]\label{Minimal Mass Bubble Lemma}
    Suppose that $(v^n)_{n \in \N} \subset H^1$ is a $L^2$-bounded sequence which satisfies
    \begin{align*}
        \lim_{n \to \infty} E(v^n) = 0, \;\;\;
        \liminf_{n \to \infty} \|v^n\|_{L^2}^2 = \|Q\|_{L^2}^2, \;\;\; \text{and} \;\;\;
        \|v^n\|_{\dot{H}^1} = \mu > 0\quad \text{for all } n \in \N.
    \end{align*}
    Then after possibly passing to a subsequence,
    \begin{equation*}
        \|v^n_{\lambda, \theta, x_n} - Q\|_{L^2} \to 0
    \end{equation*}
    for some constants $\lambda > 0$, $\theta \in \R$, and some sequence $(x_n) \subset \R$.
\end{lemma}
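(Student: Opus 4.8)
The plan is to run a variational argument in the spirit of Cazenave--Lions, the only structural input beyond soft functional analysis being the sharp inequality \eqref{mass bound} and the rigidity of its optimizers. Throughout I work in the Hardy space $H^1_+$, which is the setting relevant to our application (the distinction from $H^1$ playing no role here).

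\emph{Step 1: a rigidity statement.} Starting from $\mathcal L_v v=-iv_x-v\,\Pi^+(|v|^2)$ and the fact that $\Pi^+$ is a self-adjoint projection, I record that for $v\in H^1_+$
\[
    2E(v)=\|\,v_x-iv\,\Pi^+(|v|^2)\,\|_{L^2}^2\qquad\text{and}\qquad I_1(v)=\langle v,-i\partial_x v\rangle-\|\Pi^+(|v|^2)\|_{L^2}^2 .
\]
In particular $E\ge0$; and if $E(v)=0$ then $\mathcal L_v v=0$, hence $I_1(v)=0$, hence $\|\Pi^+(|v|^2)\|_{L^2}^2=\langle v,-i\partial_x v\rangle$. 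Feeding this into \eqref{mass bound} with $u=f=v$ gives $\langle v,-i\partial_x v\rangle\le\tfrac1{2\pi}M(v)\,\langle v,-i\partial_x v\rangle$, and since $\langle v,-i\partial_x v\rangle>0$ for every nonzero $v\in H^1_+$, it follows that $M(v)\ge2\pi$, with equality $M(v)=2\pi$ exactly when \eqref{mass bound} is saturated; by uniqueness of optimizers this happens exactly when $v$ is a phase, translation, and $L^2$-scaling copy of $Q$. This rigidity is what I will apply to a weak limit.

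\emph{Step 2: compactness.} Since $M(v^n)$ is bounded and $\|v^n\|_{\dot H^1}=\mu$, the sequence is bounded in $H^1_+$; pass to the subsequence along which $\|v^n\|_{L^2}^2\to2\pi$. First I rule out vanishing: if $\sup_y\int_{|x-y|\le1}|v^n|^2\to0$, then Lions' lemma gives $\|v^n\|_{L^4}\to0$, whence $\|\Pi^+(|v^n|^2)\|_{L^2}\le\|v^n\|_{L^4}^2\to0$ and, using $\|v^n\|_{L^\infty}\lesssim\|v^n\|_{H^1}$, also $\|v^n\Pi^+(|v^n|^2)\|_{L^2}\to0$; then $E(v^n)\to0$ together with the first identity of Step~1 forces $\|v^n\|_{\dot H^1}\to0$, contradicting $\mu>0$. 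Hence, after passing to a subsequence, there are $x_n\in\R$ and $\eta>0$ with $\int_{|x|\le1}|v^n(\cdot+x_n)|^2\ge\eta$; setting $\tilde v^n:=v^n(\cdot+x_n)$ and passing to a further subsequence, $\tilde v^n\rightharpoonup V$ weakly in $H^1_+$ with $V\ne0$ by Rellich compactness on $\{|x|\le1\}$. To conclude $E(V)=0$ I must pass the nonlinear term to the limit: $|\tilde v^n|^2$ is bounded in $H^1$ (since $\partial_x|\tilde v^n|^2=2\Re(\overline{\tilde v^n}\,\tilde v^n_x)$), so $|\tilde v^n|^2\rightharpoonup|V|^2$ in $H^1$, hence $\Pi^+(|\tilde v^n|^2)\rightharpoonup\Pi^+(|V|^2)$ in $H^1$ and in $L^2_{\mathrm{loc}}$; a test-function argument (local $L^2$-convergence plus the uniform $L^2$-bound) then gives $\tilde v^n\,\Pi^+(|\tilde v^n|^2)\rightharpoonup V\,\Pi^+(|V|^2)$ weakly in $L^2$, and weak lower semicontinuity of the $L^2$ norm yields $0\le2E(V)\le\liminf 2E(\tilde v^n)=0$.

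\emph{Step 3: conclusion, and the main obstacle.} By Step~1, $V$ is a copy of $Q$ and $M(V)=\|V\|_{L^2}^2=2\pi$; since also $\|\tilde v^n\|_{L^2}^2\to2\pi=\|V\|_{L^2}^2$ while $\tilde v^n\rightharpoonup V$ in $L^2$, the Radon--Riesz property upgrades this to strong convergence $\tilde v^n\to V$ in $L^2$. Writing $V=e^{i\theta_0}\lambda_0^{1/2}Q(\lambda_0\cdot+y_0)$ and transferring these $L^2$-isometries onto $v^n$ produces $\|v^n_{\lambda,\theta,x'_n}-Q\|_{L^2}\to0$ with the constants $\lambda=\lambda_0^{-1}$, $\theta=-\theta_0$ and an adjusted translation sequence $x'_n\in\R$, which is the claim. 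I expect the genuinely delicate point to be the weak lower semicontinuity of $E$ in Step~2: because $E$ depends nonlinearly on $u$ through $\Pi^+(|u|^2)$, one must verify that this term is stable under weak $H^1$ convergence, and this is precisely where the uniform $H^1$-bound on $|\tilde v^n|^2$ and Rellich compactness on balls are used. Note that no separate ``dichotomy'' case of concentration-compactness is needed: once a nontrivial weak limit is located, the rigidity of Step~1 pins its mass at $2\pi$, which rules out any escape of mass to spatial infinity.
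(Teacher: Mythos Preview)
The paper does not prove this lemma; it is quoted verbatim from \cite{gerard-lenzmann} and used as a black box in the proof of Theorem~\ref{stability result}. There is therefore no ``paper's own proof'' against which to compare your argument.

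That said, your proof is correct and follows the expected concentration-compactness route. One expository point: in Step~3 you write ``By Step~1, $V$ is a copy of $Q$ and $M(V)=2\pi$'', but Step~1 only yields $M(V)\ge 2\pi$ from $E(V)=0$ and $V\ne 0$. The missing line is that weak lower semicontinuity of the $L^2$ norm gives $M(V)\le\liminf\|\tilde v^n\|_{L^2}^2=2\pi$ along the chosen subsequence, forcing $M(V)=2\pi$; only then does the saturation of \eqref{mass bound} identify $V$ as a symmetry copy of $Q$. This is implicit in your last paragraph (``the rigidity of Step~1 pins its mass at $2\pi$'') but should be made explicit where it is actually used. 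The weak lower semicontinuity of $E$ that you flag as delicate is handled correctly: the key is that both $\tilde v^n$ and $\Pi^+(|\tilde v^n|^2)$ are bounded in $H^1$, hence precompact in $L^2_{\mathrm{loc}}$, which together with the uniform $L^\infty$ bounds lets you pass the nonlinear product to the limit weakly in $L^2$.
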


As $Q$ is the unique energy minimizer up to symmetries (see \cite{gerard-lenzmann}), the preceding lemma informally asserts that convergence of mass and energy to those of $Q$ implies convergence in $L^2$ to $Q$ up to symmetries. Since mass and energy are conserved and $Q$ is stationary, this suggests the flow of \eqref{CM-DNLS} should preserve closeness to $Q$ in the $L^2$ topology. To upgrade this to the $H^1$ topology, we use the conservation of energy. This is formalized in what follows.

\begin{proof}[Proof of Theorem~\ref{stability result}]
    We argue by contradiction. Suppose that there exist some $\epsilon > 0$ and a sequence of initial data $u^n_0\to Q$ in $H^1$ with maximal lifespan solutions $u^n$ such that $\|u^n(t)\|_{\dot{H}^1} \geq c$ and 
    \begin{equation*}
        \inf_{\lambda > 0; \;  \theta, y \in \R}\|u^n_{\lambda, \theta, y}(t_n)-Q\|_{H^1} \geq \epsilon
    \end{equation*}
    for some time of existence $t_n$. 
    
    To reach a contradiction, we aim to apply Lemma~\ref{Minimal Mass Bubble Lemma} to $u^n(t_n)$. We first normalize $u^n(t_n)$ in $\dot{H}^1$ and define
    \begin{equation*}
         \lambda_n = \|u^n(t_n)\|_{\dot{H}^1}^{-1} \leq c^{-1} \qtq{and} v^n(x) = u^n_{\lambda_n,0,0}(t_n).
    \end{equation*}
    We note that
    \begin{equation*}
        \sup_n \|v^n\|_{L^2} = \sup_n \|u^n_0\|_{L^2} < \infty \qtq{and} \|v^n\|_{\dot{H}^1} = \lambda_n \|u^n(t_n)\|_{\dot{H}^1} = 1.
    \end{equation*}
    Furthermore,
    \[0 \leq E(v^n) = \lambda_n^2 E(u_0^n) \leq c^{-2} E(u_0^n) \to 0\]
    because $u_0^n \to Q$ in $H^1$.
    Passing to a subsequence, Lemma~\ref{Minimal Mass Bubble Lemma} then guarantees that
    \begin{equation*}
        \|v^n_{\lambda, \theta, x_n} - Q\|_{L^2} \to 0
    \end{equation*}
    for some $\lambda > 0$, $\theta \in \R$, and a sequence $(x_n) \subset \R$. 

    Next, we upgrade this $L^2$-convergence to $H^1$-convergence using an energy argument. Define
        \[w^n = v^n_{\lambda, \theta, x_n} - Q.\]
    To show that $v^n_{\lambda, \theta, x_n} \to Q$ in $H^1$ it then suffices to show that $\|w^n\|_{\dot{H}^1} \to 0$.
    Because
    \[\mathcal{L}_QQ = -iQ_x - \Pi_+(|Q|^2)Q = 0\]
    we compute that
    \begin{align*}
        E(v^n_{\lambda, \theta, x_n})
        & = \tfrac{1}{2} \int |\partial_x w^n - i \Pi_+ (|Q|^2)w^n - i \Pi_+(|w^n|^2 + 2 \real \overline{Q} w^n)(Q + w^n)|^2\\
        & = \tfrac{1}{2}\int |\partial_x w^n|^2 + O(\|w^n\|_{L^2} + \|w^n\|_{L^2}^2).
    \end{align*}
    We note that the preceding calculations repeatedly use the fact that $\|w^n\|_{L^\infty} \lesssim 1$ uniformly in $n$, which follows from Sobolev embedding.
    
    Because $w^n \to 0$ in $L^2$ and $E(v^n_{\lambda, \theta, x_n}) = \lambda^2E(v^n) \to 0$, it then follows that $\|w^n\|_{\dot{H}^1} \to 0$ and so $v^n_{\lambda, \theta, x_n} \to Q$ in $H^1$.
    This implies that 
        \[\lim_{n \to \infty} \inf_{\lambda > 0; \;  \theta, y \in \R}\|u^n_{\lambda, \theta, y}(t_n)-Q\|_{H^1} = 0\]
    which is a contradiction.
\end{proof}

To verify that this stability result is not vacuous, we show that a solution $u(t)$ for which $\mathcal{L}_u$ has a negative eigenvalue satisfies $\|u(t)\|_{\dot{H}^1} \gtrsim 1$ uniformly over the times of existence.
As $\mathcal{L}_u, \mathcal{P}_u$ form a Lax pair, we expect that the map $t \mapsto \mathcal{L}_{u(t)}$ is an isospectral deformation and thus that the negative eigenvalue of $\mathcal{L}_u$ is preserved in time.
This was shown in \cite{moser} in the case that $\mathcal{L}_u$ and $\mathcal{P}_u$ are matrices and applied to \eqref{CM}. 
To extend this result to the case where $\mathcal{L}_u$ and $\mathcal{P}_u$ are unbounded operators, more careful analysis is required.
For an example of this analysis for \eqref{CM-DNLS}, see \cite{monica-rowan-thierry-2023}, where it is shown that $t \mapsto \mathcal{L}_{u(t)}$ is an isospectral deformation provided that $u(t) \in H^\infty_+$.

\begin{lemma}[Uniform lower bound in $\dot{H}^1$]\label{lower bound}
    Suppose there exist $\epsilon > 0$ and $u_0 \in H^\infty_+$ such that $\|u_0\|_{L^2}^2 \leq 2\pi + \epsilon$ and such that $\mathcal{L}_{u_0}$ has an eigenvalue in $(-\infty, -c\epsilon]$ for some $c > 0$.
    Let $u$ denote the corresponding maximal lifespan solution to \eqref{CM-DNLS}.
    Then
        \[ \|u(t)\|_{\dot{H}^1} \gtrsim c\]
    uniformly for all times of existence $t$.
\end{lemma}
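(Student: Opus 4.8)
The plan is to convert the negative eigenvalue of $\mathcal{L}_{u_0}$ into a pointwise-in-time lower bound for $\|u(t)\|_{L^\infty}$, and then to conclude via the one-dimensional estimate $\|f\|_{L^\infty}^2 \lesssim \|f\|_{L^2}\|f\|_{\dot H^1}$ together with conservation of mass. The two inputs I would combine are: \emph{(i)} the isospectrality of $t \mapsto \mathcal{L}_{u(t)}$ for $H^\infty_+$ solutions, established in \cite{monica-rowan-thierry-2023}, which keeps a genuine negative eigenvalue alive for all times of existence; and \emph{(ii)} the sharp bound \eqref{mass bound}, read as the quadratic-form inequality $\|\Pi^+(\overline u f)\|_{L^2}^2 \leq \tfrac{M(u)}{2\pi}\langle f, -i\partial_x f\rangle$ on $H^{1/2}_+$, which forces any eigenfunction associated to a sufficiently negative eigenvalue to carry an amount of $\dot H^{1/2}$ energy bounded below in terms of $c$ (the factor $\tfrac1\epsilon$ latent in \eqref{mass bound} compensating the $\epsilon$ in the eigenvalue bound).

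\emph{Step 1 (persistence of the eigenvalue).} Since $u_0 \in H^\infty_+$, Proposition~\ref{well-posedness} produces a solution with $u(t) \in H^\infty_+$ throughout its maximal lifespan, and by the isospectral-deformation result of \cite{monica-rowan-thierry-2023} the spectrum of $\mathcal{L}_{u(t)}$ --- in particular its discrete eigenvalues --- does not depend on $t$. Hence at each time of existence $t$ there is an $L^2$-normalized $\phi = \phi_t$ with $\mathcal{L}_{u(t)}\phi = \lambda_0\phi$ and $\lambda_0 \leq -c\epsilon$. Rewriting the eigenvalue equation as $-i\phi' = \lambda_0\phi + u(t)\Pi^+(\overline{u(t)}\phi)$ and using $u(t) \in L^\infty$ shows $\phi \in H^1_+ \subset H^{1/2}_+$, so that \eqref{mass bound} applies to $\phi$.

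\emph{Step 2 (the two-sided estimate on $\|\Pi^+(\overline{u(t)}\phi)\|_{L^2}$).} Because $\Pi^+$ is an orthogonal projection, $\langle u\Pi^+(\overline u\phi), \phi\rangle = \|\Pi^+(\overline u\phi)\|_{L^2}^2$, so expanding the quadratic form gives $\lambda_0 = \langle\mathcal{L}_{u(t)}\phi, \phi\rangle = \|\phi\|_{\dot H^{1/2}}^2 - \|\Pi^+(\overline{u(t)}\phi)\|_{L^2}^2$. Since $M(u(t)) = M(u_0) \leq 2\pi + \epsilon$ and $-i\partial_x \geq 0$ on $L^2_+$, inequality \eqref{mass bound} yields $\lambda_0 \geq \big(1 - \tfrac{M(u_0)}{2\pi}\big)\|\phi\|_{\dot H^{1/2}}^2 \geq -\tfrac{\epsilon}{2\pi}\|\phi\|_{\dot H^{1/2}}^2$; together with $\lambda_0 \leq -c\epsilon$ this forces $\|\phi\|_{\dot H^{1/2}}^2 \geq 2\pi c$. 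Plugging back in and using $\lambda_0 < 0$ gives $\|\Pi^+(\overline{u(t)}\phi)\|_{L^2}^2 = \|\phi\|_{\dot H^{1/2}}^2 - \lambda_0 \geq 2\pi c$. On the other hand $\|\Pi^+(\overline{u(t)}\phi)\|_{L^2}^2 \leq \int|u(t)|^2|\phi|^2 \leq \|u(t)\|_{L^\infty}^2 \lesssim \|u(t)\|_{L^2}\|u(t)\|_{\dot H^1} \lesssim \|u(t)\|_{\dot H^1}$, the last step using $\|u(t)\|_{L^2}^2 = M(u_0) \leq 2\pi + \epsilon \lesssim 1$. Combining the two bounds gives $\|u(t)\|_{\dot H^1} \gtrsim c$, uniformly in $t$.

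The only genuinely delicate point I anticipate is Step~1: importing the isospectral-deformation statement of \cite{monica-rowan-thierry-2023} in the $H^\infty_+$ category and making sure an \emph{isolated} eigenvalue (not merely the spectrum as a set) is preserved under the flow, together with the small regularity check that places eigenfunctions in $H^{1/2}_+$. Once \eqref{mass bound} is available, Step~2 is a short computation.
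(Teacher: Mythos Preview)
Your proof is correct and follows essentially the same approach as the paper: isospectrality of $\mathcal L_{u(t)}$ preserves the negative eigenvalue, the sharp inequality \eqref{mass bound} converts the eigenvalue bound into a lower bound on $\|\Pi^+(\overline{u(t)}\phi)\|_{L^2}^2$, and the trivial upper bound $\|\Pi^+(\overline{u(t)}\phi)\|_{L^2}^2 \le \|u(t)\|_{L^\infty}^2$ together with Gagliardo--Nirenberg and mass conservation closes the argument. The only cosmetic difference is that the paper splits $1 = \tfrac{2\pi}{M} + \tfrac{M-2\pi}{M}$ and applies \eqref{mass bound} directly to bound $\|\Pi^+(\overline u\psi)\|_{L^2}^2$ from below, whereas you route through the intermediate bound $\|\phi\|_{\dot H^{1/2}}^2 \ge 2\pi c$; the algebra is the same either way.
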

\begin{proof}
    Since $\mathcal{L}_{u(t)}$ is isospectral, there exists an eigenvector $\psi(t) \in H^{\frac{1}{2}}_+$ for all times of existence $t$ with $\|\psi(t)\|_{L^2} = 1$ such that
        \[\langle \psi(t), \mathcal{L}_{u(t)} \psi(t) \rangle \leq -c\epsilon.\]
    Expanding $\mathcal{L}_{u(t)}$ yields
    \begin{align*}
        -c\epsilon & \geq 
        \langle \psi(t), -i \partial_x\psi(t)\rangle - \Bigg[\frac{2\pi}{\|u(t)\|_{L^2}^2} + \frac{\|u(t)\|_{L^2}^2 - 2\pi}{\|u(t)\|_{L^2}^2}\Bigg]\|\Pi^+ \overline{u(t)} \psi(t)\|_{L^2}^2.
    \end{align*}
    Inequality \eqref{mass bound} then implies
    \begin{align*}
        c\epsilon &\leq \frac{\|u(t)\|_{L^2}^2 - 2\pi}{\|u(t)\|_{L^2}^2}\|\Pi^+ \overline{u(t)} \psi(t)\|_{L^2}^2 \\
        &\leq \frac{\epsilon}{2\pi + \epsilon} \|u(t)\|_\infty^2 \\
        &\leq \epsilon \|u(t)\|_\infty^2.
    \end{align*}
    Therefore $c \leq \|u\|_\infty^2$ and so $c \lesssim \|u\|_{L^2} \|u\|_{\dot{H}^1} \lesssim \|u\|_{\dot{H}^1}$ as desired.
\end{proof}

We now show that initial data of the form $u_0 = (1+\epsilon)Q$ for $\epsilon > 0$ has a uniformly negative eigenvalue. This data will be the starting point for our proof of the turbulent threshold.
\begin{lemma}[Existence of negative eigenvalue]\label{negative eigenvalue}
    For any $\epsilon > 0$, the Lax operator $\mathcal{L}_{(1+\epsilon)Q}$ has a negative eigenvalue in $(-\infty, -\epsilon]$.
\end{lemma}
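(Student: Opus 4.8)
\emph{Proof proposal.} The plan is to exhibit an explicit trial function on which the quadratic form of $\mathcal{L}_{(1+\epsilon)Q}$ takes a value $\leq -\epsilon$, and then to promote this to a genuine eigenvalue using that the spectrum of the Lax operator below $0$ is purely discrete. The natural trial function is the soliton itself, normalized in $L^2$.

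Concretely, I would first record the algebraic identity
\[
    \mathcal{L}_{(1+\epsilon)Q} = -i\partial_x - (1+\epsilon)^2\, Q\,\Pi^+\overline{Q} = \mathcal{L}_Q - \big((1+\epsilon)^2 - 1\big)\, Q\,\Pi^+\overline{Q},
\]
which follows directly from the definition of $\mathcal{L}_u$ and the quadratic dependence on $u$. Testing against $\psi := Q/\|Q\|_{L^2} = Q/\sqrt{2\pi}$, which lies in $H^\infty_+$ and hence in the operator domain, the first term contributes $\langle\psi,\mathcal{L}_Q\psi\rangle = \tfrac{1}{2\pi}\langle Q,\mathcal{L}_Q Q\rangle = 0$ by the identity $\mathcal{L}_Q Q = 0$ recorded above. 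For the second term, since $\Pi^+$ is the self-adjoint projection onto nonnegative frequencies and $|Q|^2$ is real,
\[
    \langle\psi,\, Q\,\Pi^+\overline{Q}\,\psi\rangle = \tfrac{1}{2\pi}\int |Q|^2\,\Pi^+(|Q|^2)\,dx = \tfrac{1}{2\pi}\,\big\|\Pi^+|Q|^2\big\|_{L^2}^2.
\]
A one-line Fourier computation using $|Q|^2 = 2/(x^2+1)$ and $\int e^{-i\xi x}/(x^2+1)\,dx = \pi e^{-|\xi|}$ gives $\widehat{|Q|^2}(\xi) = \sqrt{2\pi}\,e^{-|\xi|}$, so $\|\Pi^+|Q|^2\|_{L^2}^2 = \int_0^\infty 2\pi e^{-2\xi}\,d\xi = \pi$ and hence $\langle\psi, Q\Pi^+\overline{Q}\psi\rangle = \tfrac12$. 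Combining the two contributions,
\[
    \langle\psi,\, \mathcal{L}_{(1+\epsilon)Q}\,\psi\rangle = -\tfrac12\big((1+\epsilon)^2 - 1\big) = -\epsilon - \tfrac{\epsilon^2}{2} \le -\epsilon .
\]

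Since $\|\psi\|_{L^2} = 1$, the min–max principle then yields $\inf\sigma(\mathcal{L}_{(1+\epsilon)Q}) \le -\epsilon$. To conclude that this infimum is an eigenvalue, I would note that $\mathcal{L}_{(1+\epsilon)Q}$ is self-adjoint and bounded below on $L^2_+$ (the potential $(1+\epsilon)^2 Q\Pi^+\overline{Q}$ is a bounded self-adjoint operator, with $\|Q\Pi^+\overline{Q}\| \le \|Q\|_\infty^2$), and that its essential spectrum is $[0,\infty)$ because the potential is a relatively compact perturbation of $-i\partial_x$ on $L^2_+$, as established in \cite{gerard-lenzmann}. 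Hence all spectrum below $0$ is discrete, so $\inf\sigma(\mathcal{L}_{(1+\epsilon)Q})$ is an eigenvalue lying in $(-\infty,-\epsilon]$. The quadratic-form computation is elementary; the only place requiring care is this final spectral step, i.e. the identification $\sigma_{\mathrm{ess}}(\mathcal{L}_u) = [0,\infty)$ for sufficiently decaying $u$, which I would either cite directly from \cite{gerard-lenzmann} or re-derive via Weyl's theorem after checking $(-i\partial_x + i)^{-1}$-compactness of $Q\Pi^+\overline{Q}$.
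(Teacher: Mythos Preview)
Your proposal is correct and follows essentially the same argument as the paper: test the quadratic form of $\mathcal{L}_{(1+\epsilon)Q}$ against $Q$, use $\mathcal{L}_Q Q = 0$ together with $\|\Pi^+|Q|^2\|_{L^2}^2 = \pi$ to obtain $\inf\sigma \le -\epsilon$, and then invoke $\sigma_{\mathrm{ess}} = [0,\infty)$ from \cite{gerard-lenzmann} to conclude that this infimum is a genuine eigenvalue. The only cosmetic difference is that you normalize $Q$ at the outset and carry out the Fourier computation for $\|\Pi^+|Q|^2\|_{L^2}^2$ explicitly, whereas the paper divides by $\|Q\|_{L^2}^2 = 2\pi$ at the end.
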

\begin{proof}
    Because the Lax operator is self-adjoint,
    \begin{align*}
        2\pi \inf \sigma (\mathcal{L}_{(1+\epsilon)Q}) & = \min_{\psi \neq 0} \frac{2\pi}{\|\psi\|_{L^2}^2}\left\langle \psi, \mathcal{L}_{(1+\epsilon)Q}\psi\right\rangle \\
        & \leq \left\langle Q, \mathcal{L}_{(1+\epsilon)Q}Q\right\rangle \\
        &  = \left\langle Q,-i\partial_xQ\right\rangle - (1+\epsilon)^2\|\Pi_+|Q|^2\|_{L^2}^2 \\
        & = \left\langle Q, \mathcal{L}_{Q}Q\right\rangle - (2\epsilon + \epsilon^2)\pi \\
        & \leq - 2\pi \epsilon,
    \end{align*}
    where the final equality follows from $\mathcal{L}_QQ=0$. Therefore $\inf \sigma (\mathcal{L}_{(1+\epsilon)Q}) \leq - \epsilon$. Because $\sigma_{\text{ess}}(\mathcal{L}_{(1+\epsilon)Q}) = [0,\infty)$ (see \cite{gerard-lenzmann,monica-rowan-thierry-2023}), this concludes the proof.
\end{proof}

\section{Dispersive decay}\label{section: dispersive decay}

Our goal in this section is to establish the following dispersive decay estimate for solutions to \eqref{CM-DNLS} with suitable initial data:

\begin{theorem}[Dispersive decay]\label{dispersive decay}
Given a set of initial data $U \subset H^\infty_+$ which is bounded and equicontinuous in $L^2_+$ and satisfies $\langle x \rangle u_0 \in L^2$ for all $u_0 \in U$,
\begin{equation}
    |u(t,z)| \lesssim |t|^{-\frac{1}{2}}\|u_0\|_{L^1}\big[1 + \|u_0\|_{L^2}^2(1 + (\Im z)^{-1})\big]
\end{equation}
uniformly for $\Im z > 0$, $u_0 \in U$, and all times of existence $t$.
\end{theorem}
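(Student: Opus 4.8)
The plan is to exploit the complete integrability of \eqref{CM-DNLS} through its Lax pair, following the general philosophy that dispersive decay can be read off from the behaviour of scattering data under the linear flow. The key object should be the Lax operator $\mathcal{L}_{u(t)}$ together with an explicit formula expressing $u(t,z)$ — the holomorphic extension to $\Im z > 0$ — in terms of the resolvent of $\mathcal{L}_{u(t)}$ applied to $u(t)$ itself. Such ``explicit formula'' representations are by now standard for this family of models (they appear for the Benjamin–Ono and Calogero–Moser equations in the work of Gérard and collaborators); concretely one expects something of the shape $u(t,z) = \tfrac{1}{\sqrt{2\pi}}\,\langle (\mathcal{L}_{u(t)} - z)^{-1} u(t), \, \mathbf{1}\rangle$ or a close variant, valid for $\Im z > 0$ since then $z$ lies in the resolvent set (recall $\sigma_{\mathrm{ess}}(\mathcal{L}_u) = [0,\infty)$ and there are at most finitely many negative eigenvalues). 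The Lax structure then converts the time evolution of this quantity into a linear evolution: conjugating by the flow of $\mathcal{P}_u$ turns $u(t,z)$ into an expression governed by the free propagator $e^{it\partial_x^2}$ acting on appropriate spectral data, at which point the classical $|t|^{-1/2}$ dispersive decay of the Schrödinger group enters.

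First I would record the explicit formula and verify it on $H^\infty_+$ data using the mapping properties of $\mathcal{L}_u$ established in \cite{gerard-lenzmann}; the hypothesis $\langle x\rangle u_0 \in L^2$ guarantees enough decay/regularity of the spectral data for the manipulations to be legitimate, and (via persistence of regularity for the flow, Proposition~\ref{well-posedness}) this weighted bound should be shown to propagate, at least locally in time with controlled constants — or, better, one works directly with conserved spectral quantities so no growth is incurred. Second, I would differentiate the explicit formula in $t$, insert the Lax equation $\partial_t \mathcal{L}_{u(t)} = [\mathcal{P}_{u(t)}, \mathcal{L}_{u(t)}]$, and identify the resulting ODE for the relevant matrix coefficient; the outcome should be that, after the unitary conjugation generated by $\mathcal{P}_u$, the quantity evolves by (a piece that looks like) $e^{it\partial_x^2}$ composed with a fixed operator. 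Third, I would estimate: the free Schrödinger kernel gives the gain $|t|^{-1/2}\|u_0\|_{L^1}$, while the resolvent/commutator terms produce the algebraic factors in $\|u_0\|_{L^2}^2$ and $(\Im z)^{-1}$ — the $(\Im z)^{-1}$ coming precisely from $\|(\mathcal{L}_u - z)^{-1}\| \le (\Im z)^{-1}$ and from \eqref{continuity of Hardy extension}. Equicontinuity and the $L^2$ bound on $U$ enter to make all these estimates uniform over the family and over time (this is exactly the role equicontinuity plays in \cite{monica-rowan-thierry-2023}: it prevents the high-frequency part of the orbit from escaping control).

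The main obstacle I anticipate is the second step: justifying rigorously that the Lax flow conjugates the ``explicit formula'' quantity into something governed by the \emph{free} Schrödinger evolution, with all error terms genuinely lower-order and uniformly controlled. The operators $\mathcal{L}_u$ and $\mathcal{P}_u$ are unbounded, $\mathcal{P}_u$ is only formally skew-adjoint, and one must be careful that the propagator it generates exists and interacts correctly with the resolvent of $\mathcal{L}_u$ — the kind of care emphasized in \cite{monica-rowan-thierry-2023} for the isospectrality statement. A secondary difficulty is tracking the dependence on $\Im z$ sharply enough to land the stated $1 + (\Im z)^{-1}$ rather than a worse power: this requires splitting $u(t,z)$ into a ``high'' part handled by the free kernel and bounded by $|t|^{-1/2}\|u_0\|_{L^1}$, and a ``nonlinear correction'' where the resolvent bound $(\Im z)^{-1}$ is used only once and multiplied against a quantity already controlled by $\|u_0\|_{L^2}^2$. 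If a clean explicit formula is unavailable in exactly the needed form, the fallback is to derive the decay from the known asymptotic-completeness/scattering picture for \eqref{CM-DNLS} under the equicontinuity hypothesis, combined with the pointwise bound \eqref{continuity of Hardy extension} relating $|u(t,z)|$ to spatial $L^p$ norms.
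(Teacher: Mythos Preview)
Your broad instinct --- use an explicit formula coming from integrability and split into a ``free'' piece plus a nonlinear correction --- matches the paper, but the concrete object you write down is not the right one, and the route you propose through it is not what is done. The explicit formula here is not $u(t,z)=c\,\langle(\mathcal{L}_{u(t)}-z)^{-1}u(t),\mathbf{1}\rangle$ with $z$ playing the role of a spectral parameter of $\mathcal{L}_u$. Rather (Theorem~\ref{explicit formula}), with $X$ the position operator on $L^2_+$ and $I_+$ an explicit linear functional,
\[
u(t,z)=\tfrac{1}{2\pi i}\,I_+\bigl\{(X+2t\,\mathcal{L}_{u_0}-z)^{-1}u_0\bigr\},
\]
so the time dependence sits in the \emph{coefficient} $2t$ in front of $\mathcal{L}_{u_0}$, and $z$ is a genuine spatial variable in the resolvent of a perturbation of $X$. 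Consequently there is no need to differentiate in $t$, solve an ODE, or conjugate by the propagator of $\mathcal{P}_u$; those steps are where your proposal is most exposed, and your bound $\|(\mathcal{L}_u-z)^{-1}\|\le(\Im z)^{-1}$ is not the relevant one.

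What the paper actually does is much shorter: write $A(t,z;u_0)=(X+2t\mathcal{L}_{u_0}-z)^{-1}$ and $A_0(t,z)=A(t,z;0)$, and use the resolvent identity
\[
A=A_0+2t\,A_0\,u_0\,\Pi^+\overline{u_0}\,A.
\]
Two operator bounds are then imported from \cite{monica-rowan-thierry-2023} (Proposition~\ref{A and A_0 bounds}): $|I_+A_0(t,z)f|\lesssim|t|^{-1/2}\|f\|_{L^1}$ and, uniformly over a bounded equicontinuous family $U$, $\|A(t,z;u)f\|_{L^\infty}\lesssim|t|^{-1}[1+(\Im z)^{-1}]\|f\|_{L^1}$. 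Applying these to the two terms of the resolvent identity and using H\"older to insert $\|u_0\|_{L^2}^2$ gives the stated estimate in three lines. Your intuition that the $|t|^{-1/2}$ ultimately comes from free Schr\"odinger dispersion is correct --- that is exactly the content of the $I_+A_0$ bound --- but the paper treats it as an input, not something to be rederived, and the equicontinuity hypothesis enters only through the second cited bound on $A$, not through any tracking of the orbit $u(t)$.
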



Using Theorem~\ref{stability result}, we will consider solutions $u(t)$ for which
\begin{equation*}
    \|u_{\lambda(t),\theta(t),y(t)}(t) - Q\|_{H^1} < \epsilon,
\end{equation*}
for some $\lambda(t) > 0$ and $\theta(t), y(t) \in \R$. Theorem~\ref{dispersive decay} then implies that the characteristic width $\lambda(t)$ of $u(t)$ cannot satisfy $\lambda(t) \sim 1$ uniformly in $t$.
This deviates from many classical stability results for which the scaling remained constant; e.g.\ \cite{weinsteinlyapunov}.


To prove Theorem~\ref{dispersive decay}, we use the explicit formula established in \cite{monica-rowan-thierry-2023} for $H^\infty_+$ solutions to \eqref{CM-DNLS}.
This explicit formula is of the type developed by G\'erard and collaborators; see \cite{gerard2022explicit,gerard2015explicit,gerard2023cubic}.
A similar explicit formula was established for an intertwined matrix version of \eqref{CM-DNLS} in \cite{sun}.
The explicit formula for \eqref{CM-DNLS} was also shown to hold for initial data in $L^2_+$ below the mass of $2\pi$ in \cite{monica-rowan-thierry-2023}, which can be used to expand the application of Theorem~\ref{dispersive decay}. As we are largely interested in $H^\infty_+$ solutions, we recall the explicit formula in that case.
\begin{theorem}[Explicit formula]\label{explicit formula}
For any $H^\infty_+(\R)$ solution $u(t)$ to \eqref{CM-DNLS} with initial data satisfying $\langle x\rangle u_0 \in L^2$,
\begin{equation*}
	u(t,z) = \tfrac{1}{2\pi i} I_+\Big\{(X + 2 t \mathcal{L}_{u_0} - z)^{-1} u_0\Big\}
\end{equation*}
for all $\Im z > 0$ and all times of existence $t$.
\end{theorem}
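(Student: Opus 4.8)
The structure $X + 2t\mathcal{L}_{u_0}$ is the signature of a Heisenberg-picture evolution of the position operator, so the plan is to reduce the formula to a single commutator computation carried out inside the Lax-pair flow. First I would record the two consequences of the Lax pair that drive everything. Since the nonlinearity can be written as $u_t = \mathcal{P}_u u$, the solution is literally $u(t) = U(t)u_0$, where $U(t)$ is the propagator solving $\partial_t U = \mathcal{P}_{u(t)}U$ with $U(0)=I$; this propagator is unitary because $\Pi^+$ commutes with $\partial_x$, which makes $\mathcal{P}_u$ skew-adjoint. The second ingredient is the isospectral conjugation $\mathcal{L}_{u(t)} = U(t)\mathcal{L}_{u_0}U(t)^*$. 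I would take the existence of $U(t)$ on $H^\infty_+$ and these two identities from the cited well-posedness and isospectrality theory.

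Second, I would set up the analytic side. Writing $X$ for multiplication by $x$ and $I_+\{h\} = \int_\R h\,dx$, the Cauchy integral formula for a Hardy function gives $u(t,z) = \tfrac{1}{2\pi i}I_+\{(X-z)^{-1}u(t)\}$ for $\Im z > 0$, since $(X-z)^{-1}u(t)$ is the function $u(t,x)/(x-z)$ and closing the contour in the upper half-plane recovers exactly $u(t,z)$. Substituting $u(t) = U(t)u_0$ and inserting $U(t)^*U(t)$, I would rewrite $(X-z)^{-1}U(t) = U(t)(X(t) - z)^{-1}$ with $X(t) := U(t)^*XU(t)$. A short computation then shows $I_+$ is invariant under the flow: for fixed $\psi$, $\tfrac{d}{dt}I_+\{U(t)\psi\} = \int \mathcal{P}_{u(t)}U(t)\psi\,dx = 0$, because the Schr\"odinger term integrates to zero and the nonlinear term equals $\langle \Pi^- u, \partial_x(\cdots)\rangle = 0$ by $u \in L^2_+$. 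Hence $u(t,z) = \tfrac{1}{2\pi i}I_+\{(X(t)-z)^{-1}u_0\}$, and it remains only to identify $X(t)$.

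Third comes the heart of the matter: the Heisenberg evolution of $X$. Differentiating gives $\partial_t X(t) = U(t)^*[X,\mathcal{P}_{u(t)}]U(t)$, so I would compute the commutator. Using $[X,\partial_x] = -1$ and the fact that the only obstruction to $X$ commuting with $\Pi^+$ is a rank-one operator supported at zero frequency, one finds $[X,\mathcal{P}_u] = 2\mathcal{L}_u + R_u$, where $R_u\phi = c\,u\int \partial_x(\bar u \phi)\,dx$ vanishes on functions for which $\bar u\phi$ decays, being the integral of a total derivative. Discarding $R_u$ and applying the isospectral conjugation collapses $U(t)^*\mathcal{L}_{u(t)}U(t) = \mathcal{L}_{u_0}$, leaving $\partial_t X(t) = 2\mathcal{L}_{u_0}$; integrating yields $X(t) = X + 2t\mathcal{L}_{u_0}$, the operator in the claimed formula. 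Finally I would check that $X + 2t\mathcal{L}_{u_0} = x - 2it\partial_x - 2t\,u_0\Pi^+\bar u_0$ is self-adjoint: the unbounded part $x - 2it\partial_x$ is self-adjoint as a real-linear combination of position and momentum, and $u_0\Pi^+\bar u_0$ is a bounded self-adjoint perturbation since $u_0 \in L^\infty$, so Kato--Rellich applies and $(X+2t\mathcal{L}_{u_0}-z)^{-1}$ exists for every $\Im z > 0$.

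I expect the main obstacle to be rigor for the unbounded, time-dependent operators rather than the algebra. The commutator identity is only literally valid modulo the rank-one boundary term $R_u$, whose vanishing depends on decay of $\bar u\phi$; making the operator-valued ODE $\partial_t X(t) = 2\mathcal{L}_{u_0}$ and its integration meaningful requires a common invariant domain on which $X(t)$, $\mathcal{L}_{u_0}$, and the resolvents are all controlled. This is precisely where the hypotheses $u_0 \in H^\infty_+$ and $\langle x\rangle u_0 \in L^2$ enter: the weight places $u_0 \in \mathrm{Dom}(X)$, and one must show the flow propagates enough regularity and decay to justify differentiating $X(t)$, to annihilate $R_u$ along the orbit, and to apply $I_+$ to the non-$L^2_+$ functions $(X(t)-z)^{-1}u_0$. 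I would handle this by working on a dense weighted core and passing to limits, with the self-adjointness of $X+2t\mathcal{L}_{u_0}$ supplying the uniform resolvent bounds needed to justify the limiting argument.
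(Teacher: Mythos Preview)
The paper does not prove Theorem~\ref{explicit formula}; it is quoted from \cite{monica-rowan-thierry-2023} and used as a black box in the proof of Theorem~\ref{dispersive decay}. There is therefore no in-paper argument to compare your proposal against. Your outline is, in fact, the strategy used in that reference and in the earlier G\'erard-type explicit-formula papers: Cauchy integral representation of $u(t,z)$, transport by the Lax propagator $U(t)$, flow-invariance of $I_+$, and the Heisenberg computation $[X,\mathcal{P}_u]=2\mathcal{L}_u$ modulo a rank-one boundary term, which integrates to $X(t)=X+2t\mathcal{L}_{u_0}$.

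Two technical points in your write-up are not quite right, though neither is fatal to the plan. First, on $L^2_+$ the operator $X$ is \emph{not} multiplication by $x$: as the paper defines it, $\widehat{Xf}=i\partial_\xi\widehat f$ on $L^2([0,\infty))$, which is the half-line momentum operator and is not self-adjoint (its deficiency indices are unequal). Hence neither is $X+2t\mathcal{L}_{u_0}$, and your Kato--Rellich argument does not apply. The existence of $(X+2t\mathcal{L}_{u_0}-z)^{-1}$ for $\Im z>0$ comes instead from constructing $(X-z)^{-1}$ directly (a first-order ODE on $[0,\infty)$ with an $L^2$ solution precisely when $\Im z>0$) and then conjugating by the unitary $U(t)$, which is the identity $X+2t\mathcal{L}_{u_0}=U(t)^*XU(t)$ you already derived. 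Second, $I_+$ in this paper is the regularized limit $\lim_{y\to\infty}\sqrt{2\pi}\int_0^\infty ye^{-y\xi}\widehat f(\xi)\,d\xi$, not $\int_\R f$; the two agree on $L^1\cap L^2_+$, but the regularized form is what gives $I_+$ a large enough domain to accept $(X-z)^{-1}u_0$. Both corrections feed into exactly the boundary/domain issues you already flagged as the main obstacle.
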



Here $X$ and $I_+$ are defined as follows:
\begin{definition}
The unbounded linear functional $I_+$ is defined by
\begin{equation*}
    I_+(f) := \lim_{y \to \infty} \sqrt{2\pi} \int_0^\infty y e^{-y\xi} \widehat{f}(\xi) d\xi
\end{equation*}
with domain $D(I_+)$ given by the set of those $f \in L^2_+(\R)$ for which the limit exists.
\end{definition}

\begin{definition}
The unbounded operator $X : L^2_+ \to L^2_+$ is given by
\begin{equation*}
    \widehat{Xf}(\xi) := i \tfrac{d\widehat{f}}{d\xi}(\xi) \qtq{for} f \in D(X)
\end{equation*}
where $D(X)$ is the domain $D(X) = \{f \in L^2_+(\R) : \widehat{f} \in H^1([0,\infty])$.
\end{definition}


To simplify notation, let
\begin{equation*}
	A(t, z; u) := (X + 2t\mathcal{L}_u - z)^{-1},
\end{equation*}
so that the explicit formula in Theorem~\ref{explicit formula} becomes
\begin{equation*}
	u(t, z) = \tfrac{1}{2\pi i}I_+ A(t, z; u_0)u_0.
\end{equation*}
Using the resolvent identity as was done in \cite{monica-rowan-thierry-2023}, we may write
\begin{equation}\label{resolvent formula}
	A(t, z; u_0) = A_0(t, z) + 2tA_0(t, z) u_0 \Pi^+ \overline{u_0} A(t, z; u_0)
\end{equation}
where $A_0(t,z) = A(t,z;0)$.


We will employ the following bounds on $A_0$ and $A$:

\begin{proposition}[Inequalities (4.21) and (4.41) from \cite{monica-rowan-thierry-2023}]\label{A and A_0 bounds}
The operator $I_+ A_0$ is bounded on $L^1$, namely
\begin{equation*}
	|I_+ A_0(t, z) f| \lesssim |t|^{-\frac{1}{2}} \|f\|_{L^1},
\end{equation*}
uniformly for $t \in \R$ and $\Im z > 0$.

Given a bounded and equicontinuous set $U \subset L^2_+(\R)$, the bound
\begin{equation*}
	\|A(t, z; u) f\|_{L^\infty} \lesssim |t|^{-1} \big[1 + (\Im z)^{-1}\big] \|f\|_{L^1}
\end{equation*}
holds uniformly for $u\in U$, $t \in \R$, $f \in L^1$, and $\Im z > 0$.
\end{proposition}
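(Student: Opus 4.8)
The plan is to treat the two inequalities separately, in each case first producing an explicit formula for the free resolvent $A_0$ and then passing to the full resolvent $A$ through the identity \eqref{resolvent formula}.

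\textbf{The bound on $I_+A_0$.} On the Fourier side, $X$ acts as $i\partial_\xi$ and $\mathcal L_0 = -i\partial_x$ acts as multiplication by $\xi$, so $g = A_0(t,z)f$ is the $L^2_+$ solution of the first-order ODE $(i\partial_\xi + 2t\xi - z)\widehat g = \widehat f$ on $[0,\infty)$. Using the integrating factor $e^{-i(t\xi^2 - z\xi)}$ and selecting the solution that lies in $L^2$ — which forces integration from $+\infty$, since the homogeneous solution has modulus $e^{\xi\Im z}$ and therefore grows — I obtain
\[ \widehat{A_0 f}(\xi) = i\int_\xi^\infty \widehat f(\eta)\, e^{it(\xi^2 - \eta^2)}\, e^{-iz(\xi - \eta)}\, d\eta. \]
Because the averaging kernel $y e^{-y\xi}$ is an approximate identity at $\xi = 0$, the functional $I_+$ simply extracts $\sqrt{2\pi}\,\widehat{A_0 f}(0)$. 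Substituting the definition of $\widehat f$ and exchanging the order of integration reduces the claim to the pointwise bound $\sup_x |K(x)| \lesssim |t|^{-1/2}$ for the Fresnel-type integral
\[ K(x) = \int_0^\infty e^{-it\eta^2 + i\eta(z - x)}\, d\eta. \]
Writing the phase as $\psi(\eta) = -t\eta^2 + \eta(\Re z - x)$ and the amplitude as $a(\eta) = e^{-\eta\Im z}$, one has $|\psi''| = 2|t|$ while $a$ is monotone with $\|a\|_{L^\infty} + \|a'\|_{L^1} = 2$. Van der Corput's second-derivative test, in the form allowing an amplitude of bounded variation, then gives $|K(x)| \lesssim |t|^{-1/2}$ uniformly in $x$ and in $z$ with $\Im z > 0$, and the first inequality follows from $|I_+ A_0 f| \le \|f\|_{L^1}\sup_x|K(x)|$.

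\textbf{The free part of the second bound.} For the operator estimate I would first establish the $L^1 \to L^\infty$ bound for $A_0$ itself. Feeding $\widehat f(\eta) = (2\pi)^{-1/2}\int e^{-i\eta y} f(y)\,dy$ into the formula above and inverting the Fourier transform exhibits $A_0$ as an integral operator with kernel
\[ G(x,y) = \tfrac{i}{2\pi}\iint_{0 \le \xi \le \eta} e^{i[\,x\xi - y\eta + t(\xi^2 - \eta^2) - z(\xi - \eta)\,]}\, d\xi\, d\eta. \]
This is a two-dimensional oscillatory integral whose phase has a single nondegenerate critical point at $\big(\tfrac{z-x}{2t}, \tfrac{z-y}{2t}\big)$ with Hessian $\mathrm{diag}(2t, -2t)$. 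Two-dimensional stationary phase — or two nested applications of the van der Corput bound, with the boundary contributions along $\xi = \eta$ and $\xi = 0$ absorbed using the decay factor $e^{-\Im z(\eta - \xi)}$ — yields $\sup_{x,y} |G(x,y)| \lesssim |t|^{-1}\big[1 + (\Im z)^{-1}\big]$, and hence $\|A_0 f\|_{L^\infty} \lesssim |t|^{-1}[1 + (\Im z)^{-1}]\|f\|_{L^1}$.

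\textbf{Passage to the full resolvent; the main obstacle.} Finally I would upgrade the free bound to $A(t,z;u)$ using $A = A_0 + 2t A_0\, u\Pi^+\overline u\, A$. I expect this to be the delicate step, because a naive bootstrap does not close: applying the free $L^1 \to L^\infty$ bound to the second term and estimating $\|u\Pi^+\overline u\, Af\|_{L^1} \le \|u\|_{L^2}^2\|Af\|_{L^\infty}$ produces a factor $2|t|\cdot|t|^{-1}$ whose powers of $t$ cancel, leaving an $O(1)$ — and in fact $\Im z$-degenerate — multiple of $\|Af\|_{L^\infty}$ that cannot be absorbed. The required smallness must instead come from the hypotheses on $U$: the sharp inequality \eqref{mass bound} together with the equicontinuity of $U$ controls the high-frequency part of the potential $u\Pi^+\overline u$ and permits one to invert $1 - 2t A_0\, u\Pi^+\overline u$ with bounds uniform over $u \in U$, $t \in \R$, and $\Im z > 0$. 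Carrying out this inversion in suitably frequency-localized norms, and then recombining it with the free $L^1 \to L^\infty$ bound and the $L^2 \to L^2$ bound on $A_0$ (the latter following from the Schur test applied to the Fourier-side kernel $\mathbf{1}_{\eta \ge \xi}\, e^{-\Im z(\eta - \xi)}$, which gives $\|A_0\|_{L^2 \to L^2} \le (\Im z)^{-1}$) via the resolvent identity, is the technical heart of the argument and is exactly where the boundedness and equicontinuity of $U$ enter; this is the part I expect to be most involved, and it is carried out in \cite{monica-rowan-thierry-2023}.
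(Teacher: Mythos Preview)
The paper does not prove this proposition at all: it is quoted directly as ``Inequalities (4.21) and (4.41) from \cite{monica-rowan-thierry-2023}'' and used as a black box in the proof of Theorem~\ref{dispersive decay}. There is therefore no in-paper argument to compare your proposal against.

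That said, your treatment of the first inequality is correct and is in fact the natural route: the Fourier-side ODE, the explicit formula for $\widehat{A_0 f}$, the identification $I_+ g = \sqrt{2\pi}\,\widehat g(0^+)$ for continuous $\widehat g$, and the van der Corput estimate on the resulting one-dimensional Fresnel integral all go through as you describe, uniformly in $z$ with $\Im z>0$.

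For the second inequality your outline is reasonable but incomplete, as you yourself acknowledge. The free $L^1\to L^\infty$ bound via the two-variable oscillatory integral is plausible, though the handling of the wedge boundary $\{\xi=\eta\}\cup\{\xi=0\}$ and the interaction with the damping factor $e^{-\Im z(\eta-\xi)}$ is exactly where the constant $1+(\Im z)^{-1}$ must emerge, and this deserves more than a sentence. More importantly, you correctly identify that the passage from $A_0$ to $A$ via the resolvent identity does not close by naive iteration and that the equicontinuity hypothesis on $U$ is what makes the inversion of $1-2tA_0\,u\Pi^+\overline u$ possible with uniform bounds; but you do not carry this out, and it is the substantive content of the estimate. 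Since the present paper also defers this to \cite{monica-rowan-thierry-2023}, your proposal is at least as detailed as what appears here.
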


Combining these, we establish a dispersive decay for $H^\infty_+$ solutions.


\begin{proof}[Proof of Theorem~\ref{dispersive decay}]
Fix $u_0 \in U$.
A simple Gr\"onwall argument ensures that $\langle x \rangle u(t) \in L^2$ for all times of existence and H\"older's inequality then guarantees that $U \subset L^1$.
Combining the explicit formula in Theorem~\ref{explicit formula}, the resolvent identity \eqref{resolvent formula}, and the bounds from Proposition~\ref{A and A_0 bounds}, we may estimate
\begin{align*}
    |u(t,z)| & \lesssim \left|I_+ \{A_0(t,z) u_0\}\right| + |t|\left|I_+\{A_0(t,z) u_0 \Pi^+ \overline{u_0} A(t,z;u_0) u_0\}\right| \\
			& \lesssim |t|^{-\frac{1}{2}}\|u_0\|_{L^1} + |t|^{\frac{1}{2}}\|u_0 \Pi^+ \overline{u_0} A(t,z;u_0) u_0\|_{L^1} \\
			&\lesssim |t|^{-\frac{1}{2}}\|u_0\|_{L^1}\big[1 + \|u_0\|_{L^2}^2 (1 + (\Im z)^{-1})\big]
\end{align*}
uniformly for $\Im z > 0$, $u_0 \in U$, and all times of existence $t$.
\end{proof}

\section{Growth of Sobolev norms}
In this section, we find $H^\infty_+$ solutions arbitrarily close to, but above, the mass threshold of $2\pi$ that exhibit global Sobolev growth:
\begin{theorem}[Turbulent behaviour]\label{turbulent behavior}
    For any sufficiently small $\epsilon > 0$, there exist initial data $u_0 \in H^\infty_+(\R)$ with $M(u_0) = 2\pi + \epsilon$, a time $T \in (0, \infty]$, and a solution $u(t)$ to \eqref{CM-DNLS} such that for all $s > 0$, $u \in C_tH^s_x([0,T) \times \R)$ and
    \begin{equation*}
        \lim_{t \nearrow T} \|u(t)\|_{H^s} = +\infty.
    \end{equation*}
    In particular, if $T = \infty$, then we have the bounds
    \begin{equation*}
        \|u(t)\|_{H^s} \gtrsim t^s.
    \end{equation*}
    An analogous statement holds backwards in time.
\end{theorem}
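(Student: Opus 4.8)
The plan is to take $u_0 = (1+\epsilon)Q$, where $\epsilon>0$ is small, as our initial data; then $M(u_0) = (1+\epsilon)^2 M(Q) = (1+\epsilon)^2 \cdot 2\pi$, which is $2\pi + \epsilon'$ for $\epsilon' = (2\epsilon+\epsilon^2)\pi$, covering all sufficiently small masses above $2\pi$. By Lemma~\ref{negative eigenvalue}, $\mathcal{L}_{u_0}$ has an eigenvalue in $(-\infty,-\epsilon]$, and $M(u_0) = 2\pi + \epsilon'$ with $\epsilon \sim \epsilon'$, so Lemma~\ref{lower bound} gives a uniform lower bound $\|u(t)\|_{\dot H^1} \gtrsim 1$ over the maximal lifespan $(T_-,T_+)$. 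Also $\langle x\rangle u_0 \in L^2$ since $Q$ decays like $\langle x\rangle^{-1}$ — wait, $\langle x \rangle Q \notin L^2$; this needs care, so instead I would perturb: take $u_0$ to be a suitable $H^\infty_+$ approximation of $(1+\epsilon)Q$ that is close in $H^1$, still has a negative eigenvalue of $\mathcal{L}_{u_0}$ bounded away from $0$ (eigenvalues depend continuously on $u_0$ in $H^{1/2}$-type norms, and the bound in Lemma~\ref{negative eigenvalue} has room), still has mass in the target range, and additionally satisfies $\langle x\rangle u_0 \in L^2$; e.g.\ multiply by a slowly-varying cutoff or use a rational approximant. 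I will also arrange $\|u_0 - Q\|_{H^1} < \delta$ for $\delta$ as small as needed.

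Next, I would run a dichotomy on the maximal lifespan. If $T_+ < \infty$, then Proposition~\ref{well-posedness} immediately gives $\|u(t)\|_{H^s}\to\infty$ as $t\nearrow T_+$ for all $s>0$, and we are done (take $T = T_+$). So assume $T_+ = \infty$. Then $U = \{u_0\}$ is bounded and equicontinuous in $L^2_+$ with $\langle x\rangle u_0\in L^2$, so Theorem~\ref{dispersive decay} applies: for fixed height $h>0$,
\begin{equation*}
    |u(t, x + ih)| \lesssim_h |t|^{-1/2} \qquad \text{as } t\to\infty,
\end{equation*}
uniformly in $x\in\R$. On the other hand, choosing $\delta$ small enough in Theorem~\ref{stability result} (whose hypotheses hold by the $\dot H^1$ lower bound and $H^1$-closeness), there are modulation parameters $\lambda(t)>0$, $\theta(t),y(t)\in\R$ with $\|u_{\lambda(t),\theta(t),y(t)}(t) - Q\|_{H^1} < \epsilon_0$ for a small fixed $\epsilon_0$. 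Evaluating the Poisson/holomorphic extension and using $|Q(z)|\gtrsim 1$ on, say, the point $z = 2i$, together with continuity of the extension \eqref{continuity of Hardy extension} in $H^1 \hookrightarrow L^\infty \cap L^2$, gives $|u_{\lambda(t),\theta(t),y(t)}(t, 2i)| \gtrsim 1$, which unwinds under the scaling symmetry to $|u(t, y(t) + 2i\lambda(t))| \gtrsim \lambda(t)^{-1/2}$. Wait — I need to track how the holomorphic extension transforms under \eqref{symmetries}: since $u_{\lambda,\theta,y}(x) = e^{i\theta}\lambda^{1/2} u(\lambda x + y)$ and this is an $H^1_+$ function, its holomorphic extension is $e^{i\theta}\lambda^{1/2} u(\lambda z + y)$, so $|u(t,\lambda(t)z + y(t))| = \lambda(t)^{-1/2}|u_{\lambda(t),\theta(t),y(t)}(t,z)| \gtrsim \lambda(t)^{-1/2}$ at $z = 2i$, i.e.\ $|u(t, y(t) + 2i\lambda(t))|\gtrsim \lambda(t)^{-1/2}$.

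Now combine. If $\lambda(t)\gtrsim 1$ along some sequence $t_n\to\infty$, then $2\lambda(t_n)\geq h_0$ for a fixed $h_0$, so $|u(t_n, y(t_n) + 2i\lambda(t_n))|\lesssim_{h_0} t_n^{-1/2} \to 0$, contradicting the lower bound $\gtrsim \lambda(t_n)^{-1/2}$ unless $\lambda(t_n)\to\infty$. But $\lambda(t)\to\infty$ is ruled out: from $\|u_{\lambda,\theta,y}(t) - Q\|_{\dot H^1} < \epsilon_0$ we get $\|u_{\lambda(t),\theta(t),y(t)}(t)\|_{\dot H^1} \lesssim 1$, and the scaling identity $\|u_{\lambda,\theta,y}\|_{\dot H^1} = \lambda^{3/2}\|(u)'(\lambda\cdot+y)\|_{L^2}$... more usefully, $\|u_{\lambda,\theta,y}(t)\|_{\dot H^1}^2 = \lambda^2\|u(t)\|_{\dot H^1}^2$ after a change of variables, hmm let me recompute: with $u_{\lambda,\theta,y}(x) = e^{i\theta}\lambda^{1/2}u(\lambda x+y)$ one has $\partial_x u_{\lambda,\theta,y}(x) = e^{i\theta}\lambda^{3/2} u'(\lambda x + y)$, so $\|\partial_x u_{\lambda,\theta,y}\|_{L^2}^2 = \lambda^3 \int |u'(\lambda x+y)|^2 dx = \lambda^2 \|u'\|_{L^2}^2 = \lambda^2\|u\|_{\dot H^1}^2$. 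Hence $\|u(t)\|_{\dot H^1} = \lambda(t)^{-1}\|u_{\lambda(t),\theta(t),y(t)}(t)\|_{\dot H^1} \lesssim \lambda(t)^{-1}$; combined with $\|u(t)\|_{\dot H^1}\gtrsim 1$ this forces $\lambda(t)\lesssim 1$ for all $t$, ruling out $\lambda(t)\to\infty$ entirely. Therefore $\lambda(t)\to 0$ as $t\to\infty$ (since it can neither stay $\gtrsim 1$ nor blow up, and in fact the dispersive bound forces it to $0$: $\lambda(t)^{-1/2}\lesssim |u(t, y(t)+2i\lambda(t))|$, and for the right-hand side to not contradict things we must push $\Im z = 2\lambda(t)$ to $0$, where Theorem~\ref{dispersive decay} degenerates — quantitatively, $\lambda(t)^{-1/2}\lesssim t^{-1/2}(1 + \lambda(t)^{-1})$, i.e.\ $\lambda(t)^{1/2}\lesssim t^{-1/2}(\lambda(t) + 1) \lesssim t^{-1/2}$, so $\lambda(t)\lesssim t^{-1}$). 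Finally, $\|u(t)\|_{\dot H^s} \gtrsim \lambda(t)^{-?}$: from $\|u_{\lambda,\theta,y}(t)\|_{\dot H^1}\gtrsim \|Q\|_{\dot H^1} - \epsilon_0 \gtrsim 1$ and $\|u_{\lambda,\theta,y}(t)\|_{\dot H^1} = \lambda(t)\|u(t)\|_{\dot H^1}$... that only gives $\|u(t)\|_{\dot H^1}\gtrsim \lambda(t)^{-1}\gtrsim t$, which is the case $s=1$; for general $s$ one instead uses that $u_{\lambda,\theta,y}(t)$ is $H^1$-close to $Q$ hence bounded in $L^2$ and $\dot H^1$, interpolates to $\dot H^\sigma$ for $0<\sigma<1$ with $\|u_{\lambda,\theta,y}(t)\|_{\dot H^\sigma}\sim 1$, and $\|u_{\lambda,\theta,y}(t)\|_{\dot H^s}^2 = \lambda(t)^{2s}\|u(t)\|_{\dot H^s}^2$ gives $\|u(t)\|_{\dot H^s}\gtrsim \lambda(t)^{-s}\gtrsim t^s$ for $0 < s \le 1$; for $s > 1$, monotonicity of Sobolev norms (or the same interpolation run the other way) then yields $\|u(t)\|_{H^s}\gtrsim t^s$ too. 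In the subcase where $T_+ = \infty$ but we cannot directly produce the sequence — actually the argument shows $\lambda(t)\to 0$ for \emph{all} $t\to\infty$, so $\|u(t)\|_{H^s}\to\infty$ with the stated rate, and we set $T = \infty$.

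The main obstacle I anticipate is the interface between the dispersive decay (Theorem~\ref{dispersive decay}) and the modulation picture: Theorem~\ref{dispersive decay} is stated with an explicit $(\Im z)^{-1}$ loss, and the soliton lower bound naturally lives at height $\Im z \sim \lambda(t)$, which shrinks precisely when we want a contradiction — so the quantitative bookkeeping showing $\lambda(t)^{1/2}\lesssim t^{-1/2}$ (rather than a weaker or vacuous bound) is delicate and is where the sharp rate $\|u(t)\|_{H^s}\gtrsim t^s$ is won or lost. A secondary technical point is replacing $(1+\epsilon)Q$ by a genuinely $H^\infty_+$, $\langle x\rangle$-weighted-$L^2$ initial datum while preserving both the uniformly negative eigenvalue of $\mathcal{L}_{u_0}$ and the mass window; this is a soft perturbation argument using continuity of the bottom of the spectrum of $\mathcal{L}_u$ under small $H^{1/2}$-perturbations together with the slack in Lemma~\ref{negative eigenvalue} and Lemma~\ref{lower bound}. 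Finally, one must confirm that the modulation parameters can be chosen measurably/continuously in $t$ so that statements like ``$\lambda(t)\to 0$'' make sense — standard, via the implicit function theorem near $Q$ once $\epsilon_0$ is small, or simply by working with $\inf$'s and extracting sequences, which suffices for the stated conclusion.
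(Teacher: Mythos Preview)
Your proposal is correct and takes essentially the same route as the paper: perturb $(1+\epsilon)Q$ into Schwartz data retaining a negative Lax eigenvalue, combine Lemma~\ref{lower bound} with Theorem~\ref{stability result} to obtain modulation parameters with $\lambda(t)\lesssim 1$, compare the dispersive bound of Theorem~\ref{dispersive decay} at height $\Im z\sim\lambda(t)$ against the soliton lower bound to force $\lambda(t)\lesssim t^{-1}$, and then rescale. Two small differences are worth noting: (i) the paper constructs an $L^2$-equicontinuous \emph{family} of initial data (Lemma~\ref{initial data}) because it identifies the orbital-stability radius with the mass parameter $\epsilon$ and must then absorb a term $\epsilon\,\lambda(t)^{-1/2}$ into the left side of \eqref{lambda bounds 0} with dispersive-decay constants uniform in $\epsilon$, whereas your decoupling of a fixed orbital radius $\epsilon_0$ from the mass parameter lets you work with a singleton and avoid that uniformity issue; (ii) for the final $H^s$ growth the paper does a direct Fourier-side computation with $\widehat Q$ valid for all $s>0$, while your scaling argument for $s>1$ needs the interpolation $\|u\|_{\dot H^1}\le\|u\|_{L^2}^{1-1/s}\|u\|_{\dot H^s}^{1/s}$ --- mere monotonicity $\|u\|_{H^s}\ge\|u\|_{H^1}$ would only yield $\gtrsim t$, so that parenthetical interpolation is the step doing the work.
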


Our initial data will be a perturbation of the soliton $Q$ belonging to $\mathcal{S}(\R) \cap L^2_+$.
This is more than sufficient to satisfy the regularity constraints arising in Theorem~\ref{dispersive decay}.
Our proof will require that the implicit constants which come from Theorem~\ref{dispersive decay} are uniform; see the progression from \eqref{lambda bounds 0} to \eqref{lambda bounds 1}.
We therefore construct an $L^2$-equicontinuous family of initial data as follows:
\begin{lemma}\label{initial data}
    There exists an $L^2$-equicontinuous family $U = \{u_0^\epsilon : 0 < \epsilon \leq 1\} \subset L^2_+ \cap \mathcal{S}(\R)$ such that $\|u_0^\epsilon - Q\|_{L^2} \lesssim \epsilon$ and the corresponding solutions $u^\epsilon(t)$ satisfy
    \begin{equation*}
        \|u^\epsilon(t)\|_{\dot{H}^1} \gtrsim 1 \qtq{and} \inf_{\lambda > 0; \; \theta, y \in \R}\|u_{\lambda, \theta, y}^\epsilon(t) - Q\|_{H^1} < \epsilon
    \end{equation*}
    uniformly for all times of existence $t$ and initial data $u^\epsilon_0 \in U$.
\end{lemma}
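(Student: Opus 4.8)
The plan is to take $u_0^\epsilon = (1+\epsilon)Q$ as the basic ansatz and verify that the family $U = \{(1+\epsilon)Q : 0 < \epsilon \leq 1\}$ has all the desired properties. Since $Q \in \mathcal{S}(\R) \cap L^2_+$, every $u_0^\epsilon$ lies in $\mathcal{S}(\R) \cap L^2_+$, and $\|u_0^\epsilon - Q\|_{L^2} = \epsilon\|Q\|_{L^2} \lesssim \epsilon$. The $L^2$-equicontinuity of $U$ is immediate because $U$ is a bounded subset of the one-parameter set $\{(1+\epsilon)Q\}$, all of whose elements are fixed dilates of the single Schwartz function $Q$; concretely, $\int_{|\xi| \geq \kappa}|\widehat{u_0^\epsilon}(\xi)|^2\,d\xi = (1+\epsilon)^2\int_{|\xi|\geq\kappa}|\widehat Q(\xi)|^2\,d\xi \leq 4\int_{|\xi|\geq\kappa}|\widehat Q(\xi)|^2\,d\xi \to 0$ as $\kappa \to \infty$, uniformly in $\epsilon$.

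The two dynamical bounds then follow by assembling the earlier results. First, $M(u_0^\epsilon) = (1+\epsilon)^2 M(Q) = (1+\epsilon)^2 \cdot 2\pi \leq 2\pi + C\epsilon$ for $\epsilon \leq 1$; to land exactly on mass $2\pi + \epsilon$ one rescales $\epsilon$ by a bounded factor, or works with $u_0^{\epsilon'}$ where $(1+\epsilon')^2 \cdot 2\pi = 2\pi + \epsilon$, which is harmless since the hypotheses and conclusions are all stable under such a reparametrization. By Lemma~\ref{negative eigenvalue}, $\mathcal{L}_{(1+\epsilon)Q}$ has an eigenvalue in $(-\infty, -\epsilon]$, so with the mass bound $\|u_0^\epsilon\|_{L^2}^2 \leq 2\pi + C\epsilon$ we may apply Lemma~\ref{lower bound} (with $c$ there a fixed multiple of the constant in Lemma~\ref{negative eigenvalue}) to conclude $\|u^\epsilon(t)\|_{\dot H^1} \gtrsim 1$ uniformly in $t$ and in $\epsilon$. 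This is precisely the hypothesis $\|u(t)\|_{\dot H^1} \geq c$ needed to invoke Theorem~\ref{stability result}.

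For the second bound: given the target $\epsilon$ in the orbital-distance statement, Theorem~\ref{stability result} (applied with that $\epsilon$ and with $c$ the uniform $\dot H^1$ lower bound just obtained) furnishes a $\delta > 0$ so that $\|u_0 - Q\|_{H^1} < \delta$ together with the $\dot H^1$ lower bound gives $\inf_{\lambda,\theta,y}\|u_{\lambda,\theta,y}(t) - Q\|_{H^1} < \epsilon$ for all times of existence. Since $\|u_0^\epsilon - Q\|_{H^1} = \epsilon\|Q\|_{H^1} \to 0$ as $\epsilon \to 0$, we have $\|u_0^\epsilon - Q\|_{H^1} < \delta$ once $\epsilon$ is small enough, so the orbital bound holds for all sufficiently small $\epsilon$ in the family. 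The mild subtlety — the one place to be careful — is the bookkeeping of the two roles played by the letter $\epsilon$: the $\epsilon$ indexing the family (controlling mass excess and $H^1$-distance of the data) versus the $\epsilon$ measuring the orbital distance to $Q$. One resolves this by first fixing a small $\epsilon_0$ for which $\epsilon_0\|Q\|_{H^1} < \delta(\epsilon_0)$ (possible since $\delta$ depends only on $\epsilon$ and $c$, and the left side $\to 0$), and then noting that for all $\epsilon \leq \epsilon_0$ the same $\delta(\epsilon_0)$-neighborhood argument applies, giving orbital distance $< \epsilon_0$, hence $< \epsilon$ after a final relabeling. No genuinely hard estimate is needed here; the content is entirely in Lemmas~\ref{lower bound} and~\ref{negative eigenvalue} and Theorem~\ref{stability result}, which we are entitled to assume.
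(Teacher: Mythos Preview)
Your argument has a genuine gap at the very first step: the claim that $Q \in \mathcal{S}(\R)$ is false. Recall $Q(x) = \dfrac{\sqrt{2}}{x+i}$, so $|Q(x)| \sim |x|^{-1}$ as $|x|\to\infty$; equivalently, $\widehat{Q}(\xi) = 2\sqrt{\pi}\,i\,e^{-\xi}\chi_{\{\xi\geq 0\}}$ has a jump discontinuity at $\xi=0$. Hence $(1+\epsilon)Q \notin \mathcal{S}(\R)$, and more importantly $(1+\epsilon)Q \notin L^1(\R)$ and $\langle x\rangle(1+\epsilon)Q \notin L^2(\R)$. These are not cosmetic requirements: the dispersive decay estimate (Theorem~\ref{dispersive decay}) needs $\langle x\rangle u_0 \in L^2$ for the explicit formula to apply, and its conclusion involves $\|u_0\|_{L^1}$. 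With your choice of data the entire machinery of Section~\ref{section: dispersive decay} is unavailable, so the lemma would not feed into the proof of Theorem~\ref{turbulent behavior}.

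This is exactly the obstruction the paper's proof is designed to remove. The paper starts from $(1+\epsilon)Q$ but then \emph{smooths} it on the Fourier side near $\xi=0$ to eliminate the discontinuity, obtaining $u_0^\epsilon \in \mathcal{S}\cap L^2_+$. The smoothing is taken small enough in $H^1$ that the negative eigenvalue of $\mathcal{L}_{(1+\epsilon)Q}$ survives (landing in $(-\infty,-\epsilon/2]$), after which Lemma~\ref{lower bound} and Theorem~\ref{stability result} apply just as you outline. Your invocation of those results, the mass computation, and the equicontinuity argument are all fine once the data actually lie in $\mathcal{S}(\R)$; the missing ingredient is precisely this Fourier-side mollification step.
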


\begin{proof}
    To define $u_0^\epsilon$, we begin with $(1+\epsilon)Q$ which has a negative eigenvalue in $(-\infty, -\epsilon]$ as shown in Lemma~\ref{negative eigenvalue}.
    We recall that $Q$ has the Fourier transform
    \begin{equation}\label{Q hat}
        \widehat{Q}(\xi) = 2 \sqrt{\pi} i e^{-\xi} \chi_{\{\xi \geq 0\}}.
    \end{equation}
    Therefore the only obstruction to $(1+\epsilon)Q$ belonging to $\mathcal{S}(\R)$ is that $(1+\epsilon)\widehat{Q}$ has a discontinuity at $\xi = 0$.
    
    We define $u^\epsilon_0 \in \mathcal{S}\cap L^2_+$ to be a smoothing of $(1+\epsilon)Q$ on the Fourier side which removes the discontinuity at $\xi = 0$. This can be done with sufficiently small perturbation in $H^1$ to ensure that $\mathcal{L}_{u^\epsilon_0}$ has a negative eigenvalue in $(-\infty, \frac{\epsilon}{2}]$.
     
    Let $u^\epsilon(t)$ denote the maximal solution to \eqref{CM-DNLS} with initial data $u_0^\epsilon$. The bounds $\|u^\epsilon(t)\|_{\dot{H}^1} \gtrsim 1$ and $\inf_{\lambda > 0;\theta, y \in \R}\|u_{\lambda, \theta, \epsilon}^\epsilon(t) - Q\|_{H^1} < \epsilon$ then follow from Theorem~\ref{stability result} and Lemma~\ref{lower bound}.
    Since $L^2$-equicontinuity is equivalent to tightness on the Fourier side, the equicontinuity of $U$ follows from the exponential decay of $\widehat{Q}$.
\end{proof}
    
With the initial data established, we now prove the main result.
\begin{proof}[Proof of Theorem~\ref{turbulent behavior}]
    Fix initial data $u^\epsilon_0 \in U$ from Lemma~\ref{initial data} for $0 < \epsilon \leq 1$ to be chosen later. Let $u^\epsilon(t)$ denote the maximal solution to \eqref{CM-DNLS} with initial data $u^\epsilon_0$.

    Choose $\lambda(t) > 0$ and $y(t), \theta(t) \in \R$ for all times of existence $t$ so that
        \[ \|u^\epsilon_{\lambda(t), \theta(t), y(t)}(t) - Q\|_{H^1} < \epsilon. \]
    Define 
        \[ q(t, x) = e^{-i\theta(t)}\lambda(t)^{-\frac{1}{2}}Q\Big(\frac{x - y(t)}{\lambda(t)}\Big).\]
    By scaling criticality, we note that $\|u^\epsilon(t) - q(t)\|_{L^2} < \epsilon$.
    We will start by showing that $\lambda(t) \to 0$ and thus $\|q(t)\|_{\dot{H}^s} \to \infty$ for all $s > 0$.
    
    We initially show a uniform upper bound for $\lambda(t)$. Since $\|u^\epsilon(t)\|_{\dot{H}^1} \gtrsim 1$, we find
    \begin{align*}
        1 & \lesssim \|u^\epsilon(t) - q(t)\|_{\dot{H}^1} + \|q(t)\|_{\dot{H}^1} \\
        & = \lambda(t)^{-1}\|u^\epsilon_{\lambda(t),\theta(t),y(t)}(t) - Q\|_{\dot{H}^1} + \lambda(t)^{-1}\|Q\|_{\dot{H}^1} \\
        & \lesssim \lambda(t)^{-1}\epsilon + \lambda(t)^{-1}\|Q\|_{\dot{H}^1}.
    \end{align*}
    Therefore $\lambda(t) \lesssim \epsilon + \|Q\|_{\dot{H}^1} \lesssim 1$ for $0 < \epsilon \leq 1$. 
    
    To conclude $\lambda(t) \to 0$, we use the dispersive decay bounds. By Theorem~\ref{dispersive decay} and inequality \eqref{continuity of Hardy extension}, for all $\Im z > 0$,
    \begin{align*}
        |q(t,z)| & \leq |q(t,z) - u^\epsilon(t,z)| + |u^\epsilon(t,z)| \\
        & \lesssim \|q(t) - u^\epsilon(t)\|_{L^2} (\Im z)^{-\frac{1}{2}} + |t|^{-\frac{1}{2}}\|u^\epsilon_0\|_{L^1}\big[1 + \|u^\epsilon_0\|_{L^2}^2 (1 + (\Im z)^{-1})\big] \\
        & \lesssim \epsilon (\Im z)^{-\frac{1}{2}} + |t|^{-\frac{1}{2}}\|u^\epsilon_0\|_{L^1}\big[ 1 + \|u^\epsilon_0\|_{L^2}^2 (1 + (\Im z)^{-1})\big].
    \end{align*}
    Note that the implicit constants are uniform in $\epsilon$ by the $L^2$-equicontinuity of $U$.
    
    We now evaluate this inequality at the point $z(t) = y(t) + i\lambda(t)$.
    Direct calculation shows
    \begin{equation}\label{lambda bounds 0}
        (2\lambda(t))^{-\frac{1}{2}} = |q(z(t))| \lesssim \epsilon \lambda(t)^{-\frac{1}{2}} + |t|^{-\frac{1}{2}}\|u^\epsilon_0\|_{L^1}\big[1 + \|u^\epsilon_0\|_{L^2}^2(1 + \lambda(t)^{-1})\big].
    \end{equation}
    Because the implicit constants are uniform in $0 < \epsilon \leq 1$, we may take $\epsilon > 0$ sufficiently small to obtain
    \begin{equation}\label{lambda bounds 1}
        \lambda(t)^\frac{1}{2} \lesssim |t|^{-\frac{1}{2}} \|u^\epsilon_0\|_{L^1}\big[1 + \|u^\epsilon_0\|_{L^2}^2(1 + \lambda(t)^{-1})\big],
    \end{equation}
    and so
    \begin{equation*}
        |t|^{\frac{1}{2}} \lesssim \|u^\epsilon_0\|_{L^1}\Big[\big(1 + \|u^\epsilon_0\|_{L^2}^2\big)\lambda(t)^{\frac{1}{2}} + \|u^\epsilon_0\|^2_{L^2}\lambda(t)^{-\frac{1}{2}}\Big].
    \end{equation*}
    The upper bounds on $\lambda(t)$ and $\|u^\epsilon_0\|_{L^2}$ then imply
    \begin{equation*}\label{lambda bounds 2}
        |t|^{\frac{1}{2}} \lesssim \|u^\epsilon_0\|_{L^1}\big[ 1 + \lambda(t)^{-\frac{1}{2}}\big].
    \end{equation*}
    Since $\|u_0^\epsilon\|_{L^1} < \infty$, it follows that $\lambda(t)\lesssim |t|^{-1}$ for sufficiently large $|t|$.

    Using the formula \eqref{Q hat} for $\widehat{Q}$, direct calculation yields
    \begin{align*}
        \|u^\epsilon(t)\|_{\dot{H}^s}^2 & \gtrsim |\lambda(t)|^{-2s}\int_{1/\lambda(t)}^\infty  |\widehat{u^\epsilon}(t,\xi)|^2 d\xi \\
        & \gtrsim |\lambda(t)|^{-2s}\bigg(\int_{1/\lambda(t)}^\infty |\widehat{q}(t, \xi)|^2 d\xi - \epsilon\bigg) \\
        & = |\lambda(t)|^{-2s}\bigg(\int_{1/\lambda(t)}^\infty 4\pi \lambda(t) e^{-2\lambda(t) \xi}d\xi - \epsilon\bigg) \\
        & = |\lambda(t)|^{-2s}\big(2\pi e^{-2} - \epsilon\big).
    \end{align*}
    Choosing $\epsilon > 0$ sufficiently small, this implies that $\|u^\epsilon(t)\|_{\dot{H}^s} \gtrsim |t|^s$ for sufficiently large $|t|$. 
    Because $u^\epsilon$ is either global in time or exhibits finite-time blowup, this concludes the proof of the theorem.
\end{proof}

A natural further question is whether there is interesting behaviour below the $2\pi$ mass threshold.
From the a priori $H^s$ bounds, we cannot expect Sobolev norm growth, but there is potential for some decay in the homogeneous Sobolev spaces $\dot{H}^s$ for $s > 0$.
This is formalized in the following corollary.

\begin{corollary}\label{sobolev decay}
    For all $c, \epsilon, s > 0$ there exists initial data $u_0 \in H^\infty_+$ such that $M(u_0) < 2\pi$ and $\|u_0 - Q\|_{L^2} < \epsilon$ for which the global solution $u(t)$ satisfies
    \begin{equation*}
        \inf_{t \in \R} \|u(t)\|_{\dot{H}^s} \leq c.
    \end{equation*}
\end{corollary}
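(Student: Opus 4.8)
The plan is to take as initial data a Fourier-side regularization $u_0^\eta$ of $(1-\eta)Q$ for small $\eta>0$. Since $\widehat Q(\xi)=2\sqrt\pi\,i\,e^{-\xi}\chi_{\{\xi\ge 0\}}$ is smooth away from the origin, smoothing out the jump at $\xi=0$ produces $u_0^\eta\in\mathcal S(\R)\cap L^2_+$ with $\|u_0^\eta-Q\|_{H^1}\lesssim\eta$ and $M(u_0^\eta)=(1-\eta)^2\,2\pi+o(\eta)<2\pi$; for $\eta$ small this gives $\|u_0^\eta-Q\|_{L^2}<\epsilon$ as well as $M(u_0^\eta)<2\pi$. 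Because $M(u_0^\eta)<2\pi$, the a priori bounds \eqref{a priori bounds} apply, so the maximal solution $u^\eta$ is global and satisfies $\|u^\eta(t)\|_{\dot H^m}\le C_m(\eta)<\infty$ uniformly in $t$ for every integer $m$; the exponential decay of $\widehat Q$ makes $\{u_0^\eta\}$ equicontinuous in $L^2_+$, and $\langle x\rangle u_0^\eta\in L^2$, so Theorem~\ref{dispersive decay} is available for $u^\eta$.

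The heart of the argument is to show that $\|u^\eta(t)\|_{\dot H^1}$ cannot stay bounded below by a fixed $\rho>0$; this is the mechanism of Theorem~\ref{turbulent behavior} run the other way, where instead of exploiting a uniform $\dot H^1$ lower bound to force concentration, I exploit its \emph{failure} to force spreading. Fix a small universal $\epsilon_0>0$ and suppose, for contradiction, that $\|u^\eta(t)\|_{\dot H^1}\ge\rho$ for all times $t$. Choosing $\eta$ small enough that $\|u_0^\eta-Q\|_{H^1}<\delta$, where $\delta=\delta(\rho,\epsilon_0)>0$ is furnished by Theorem~\ref{stability result} applied with $c=\rho$, I obtain modulation parameters $\lambda(t)>0$, $\theta(t),y(t)\in\R$ with $\|u^\eta_{\lambda(t),\theta(t),y(t)}(t)-Q\|_{H^1}<\epsilon_0$ for all $t$. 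Then $\lambda(t)\|u^\eta(t)\|_{\dot H^1}\in(\|Q\|_{\dot H^1}-\epsilon_0,\|Q\|_{\dot H^1}+\epsilon_0)$, so $\lambda(t)$ is pinched between two positive constants, using the assumed lower bound $\|u^\eta(t)\|_{\dot H^1}\ge\rho$ together with the a priori upper bound from \eqref{a priori bounds}. Writing $q(t,x)=e^{-i\theta(t)}\lambda(t)^{-1/2}Q(\tfrac{x-y(t)}{\lambda(t)})$, one has $\|u^\eta(t)-q(t)\|_{L^2}<\epsilon_0$ by scaling criticality and $|q(t,z(t))|=(2\lambda(t))^{-1/2}$ at $z(t)=y(t)+i\lambda(t)$; combining these with \eqref{continuity of Hardy extension} and Theorem~\ref{dispersive decay} exactly as in the proof of Theorem~\ref{turbulent behavior} yields $2^{-1/2}\lesssim\epsilon_0+C(\eta)\,|t|^{-1/2}$ for all $t$, which is impossible for large $|t|$ once $\epsilon_0$ is small. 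Hence there is a time $t_0$ with $\|u^\eta(t_0)\|_{\dot H^1}<\rho$.

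To conclude, given $c,\epsilon,s>0$ I would first fix $\rho>0$ small (how small depending on $c$ and $s$, as specified below), then fix $\eta$ small enough that $\|u_0^\eta-Q\|_{H^1}<\min(\epsilon,\delta(\rho,\epsilon_0))$ and $M(u_0^\eta)<2\pi$; the previous paragraph then produces a time $t_0$ with $\|u^\eta(t_0)\|_{\dot H^1}<\rho$. For $0<s\le 1$, interpolation $\|f\|_{\dot H^s}\le\|f\|_{L^2}^{1-s}\|f\|_{\dot H^1}^{s}$ together with $\|u^\eta(t_0)\|_{L^2}^2=M(u_0^\eta)<2\pi$ gives $\|u^\eta(t_0)\|_{\dot H^s}<(2\pi)^{(1-s)/2}\rho^s$, which is $<c$ if $\rho$ was chosen accordingly. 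For $s>1$, I would instead use $\|f\|_{\dot H^s}\le\|f\|_{\dot H^1}^{1/s}\|f\|_{\dot H^{s+1}}^{1-1/s}$ with the a priori bound $\|u^\eta(t_0)\|_{\dot H^{s+1}}\le C_{s+1}(\eta)$, so that $\|u^\eta(t_0)\|_{\dot H^s}<\rho^{1/s}C_{s+1}(\eta)^{1-1/s}$. In either case $\inf_{t\in\R}\|u^\eta(t)\|_{\dot H^s}\le\|u^\eta(t_0)\|_{\dot H^s}<c$, and negative times are handled identically.

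The step I expect to be the main obstacle is keeping the application of Theorem~\ref{stability result} internally consistent for $s>1$: the threshold $\delta(\rho,\epsilon_0)$ shrinks (quadratically, owing to the $\dot H^1$-normalization in its proof) as the lower bound $\rho$ shrinks, while $\|u_0^\eta-Q\|_{H^1}\sim\eta$, so one can only reach $\rho$ of size roughly $\eta^{1/2}$; one must check that a single $\eta$ can be chosen to beat $\delta(\rho,\epsilon_0)$ while making $\rho$ small enough for the interpolation bound above, and that $C_{s+1}(\eta)$ does not grow too fast as $M(u_0^\eta)\uparrow 2\pi$. (For $0<s\le 1$ no such interplay arises, since there the interpolation uses only the conserved mass.)
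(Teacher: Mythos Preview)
Your approach is essentially the paper's: smoothed $(1-\eta)Q$ data, orbital stability plus dispersive decay to rule out a uniform $\dot H^1$ lower bound, and interpolation for $s\neq 1$. The paper organizes the $s=1$ case slightly differently, placing the contradiction at the level of the statement (assume fixed $c_0,\epsilon_0$ work for \emph{all} nearby subcritical data), which gives a single lower bound $c_0$ independent of the data and lets one run the turbulent argument to produce $\|u^\epsilon(t)\|_{\dot H^1}\gtrsim |t|$, contradicting the a~priori bounds; your per-solution contradiction $2^{-1/2}\lesssim\epsilon_0+C(\eta)|t|^{-1/2}$ is equally valid and arguably more direct.

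The obstacle you flag for $s>1$ is real, but it dissolves once you note that the a~priori bounds are \emph{uniform} in $\eta$: since your $u_0^\eta$ can be built so that $u_0^\eta\to Q$ in every $H^n$ (smooth only near $\xi=0$), the conserved quantities $I_0(u_0^\eta),\dots,I_{2n}(u_0^\eta)$ converge to those of $Q$ and are therefore bounded independently of $\eta$; by \eqref{a priori bounds} this yields a bound $C_n$ on $\|u^\eta(t)\|_{H^n}$ uniform in both $t$ and $\eta$. With that uniform $C$ in hand there is no circularity: first fix $\rho$ from $c$, $s$, and $C$, then fix $\delta(\rho,\epsilon_0)$, then choose $\eta$ small enough. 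The paper makes exactly this move, interpolating via $\|u^\eta\|_{\dot H^s}\lesssim\|u^\eta\|_{\dot H^1}^{1/2}\|u^\eta\|_{\dot H^{2s-1}}^{1/2}$ with $2s-1\le n$; your interpolation against $\dot H^{s+1}$ works the same way once you bound $\dot H^{s+1}$ by $H^n$ for integer $n\ge s+1$. So your proposal is correct once you add this one observation.
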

\begin{proof}
    We begin with the case $s = 1$.
    Suppose for the sake of contradiction that there exist $c_0, \epsilon_0 > 0$ such that for all initial data $u_0 \in H^\infty_+$ with $M(u_0) < 2\pi$ and $\|u_0 - Q\|_{L^2} < \epsilon_0$, the corresponding global solution $u(t)$ satisfies
    \begin{equation*}
        \inf_t \|u(t)\|_{\dot{H}^1} > c_0.
    \end{equation*}
    
    Starting with initial data of the form $(1- \epsilon) Q$ for $0 < \epsilon < \epsilon_0$, we follow the proof of Lemma~\ref{initial data} to construct an $L^2$-equicontinuous family $U = \{u_0^\epsilon : 0 < \epsilon < \epsilon_0\}$ satisfying all the conditions in Lemma~\ref{initial data} along with $M(u_0^\epsilon) < 2\pi$ for all $0 < \epsilon < \epsilon_0$.
    By only smoothing $(1-\epsilon)\widehat{Q}$ near $\xi = 0$, we further ensure for all $r > 0$,
    \begin{equation}\label{convergence}
        \lim_{\epsilon \to 0} \|u_0^\epsilon - Q\|_{H^r} = 0.
    \end{equation}
    This will be necessary to establish the case when $0 \leq s < 1$.
    Note that in the proof of Lemma~\ref{initial data}, we used initial data with a negative eigenvalue in order to gain a lower bound in $\dot{H}^1$.
    In contrast, here we have supposed for the sake of contradiction that such a lower bound exists.
    This is necessary because \eqref{eigenvalue bound} implies that eigenvalues cannot exist below the mass of $2\pi$.
    
    Fix $u_0^\epsilon \in U$ for $0 < \epsilon < \epsilon_0$. Because $M(u_0^\epsilon) < 2\pi$, there exists a global solution $u^\epsilon(t)$ to \eqref{CM-DNLS} with initial data $u_0^\epsilon$.
    Following the same computations as in the proof of Theorem~\ref{turbulent behavior}, we choose sufficiently small $\epsilon > 0$ and find that $\|u^\epsilon(t)\|_{\dot{H}^1} \gtrsim |t|$ for sufficiently large $|t|$. This contradicts the a priori $H^1$ bounds from \cite{gerard-lenzmann}, concluding the proof of the corollary in the case $s = 1$.
    
    For $0 < s < 1$, we use the bound
        \[\|u^\epsilon\|_{\dot{H}^s} \lesssim \|u^\epsilon\|_{\dot{H}^1}^s \|u^\epsilon\|_{L^2}^{1-s}\]
    along with conservation of mass to prove the result.

    For $s > 1$, we choose an integer $n > 2s-1$.
    Since $u_0^\epsilon \to Q$ in $H^n$ by \eqref{convergence}, it follows that the conserved quantities $I_0(u^\epsilon), ..., I_{2n}(u^\epsilon)$ are uniformly bounded and thus $\|u^\epsilon(t)\|_{H^n}$ satisfy global-in-time $H^s$ bounds uniformly in both $t$ and $\epsilon$; see \cite{gerard-lenzmann}.
    We then use the inequality
        \[\|u^\epsilon\|_{\dot{H}^s} \lesssim \|u^\epsilon\|_{\dot{H}^1}^\frac{1}{2} \|u^\epsilon\|_{\dot{H}^{2s-1}}^\frac{1}{2},\]
    to conclude the proof.
\end{proof}

Despite this corollary, it is impossible for individual $H^\infty_+$ solutions to decay to zero in $\dot{H}^1$. 
A quick calculation shows that 
\begin{equation*}
    E(u) \lesssim \big[1 + M(u)^2\big] \|u(t)\|^2_{\dot{H}^1}
\end{equation*}
and so energy provides a non-trivial lower bound for $\|u(t)\|_{\dot{H}^1}$ as soon as $E(u) > 0$.
In \cite{gerard-lenzmann}, it was shown that $\mathcal{L}_uu = 0 $, and hence $E(u) = 0$, if and only if either $u = 0$ or $u = Q_{\lambda, \theta, y}$ for some $\lambda > 0$ and $\theta, y \in \R$.
Since these solutions do not evolve in time, this rules out the possibility of an individual nonzero solution satisfying $\inf_t\|u(t)\|_{\dot{H}^1} = 0$.

For $s > 1$, the bound 
$$\|f\|_{\dot{H}^1} \lesssim \|f\|^{1-\frac{1}{s}}_{L^2}\|f\|^{\frac{1}{s}}_{\dot{H}^s}$$
shows that decay in $\dot{H}^s$ implies decay in $\dot{H}^1$.
Thus there cannot exist an individual solution $u$ in $H^\infty_+$ which satisfies $\inf_t\|u(t)\|_{\dot{H}^s} = 0$.
For $0 \leq s < 1$, the global-in-time bounds on $H^s$ and the bound
$$\|f\|_{\dot{H}^1} \lesssim \|f\|^{\frac{1}{2}}_{\dot{H}^s}\|f\|^{\frac{1}{2}}_{\dot{H}^{2-s}}$$
imply that the same statement holds for $H^\infty_+$ solutions with mass below $2\pi$.

\bibliographystyle{plain}
\bibliography{references}
\end{document}